\documentclass[a4paper,reqno]{amsart}

\usepackage[utf8]{inputenc}
\usepackage[oneside]{typearea}
\usepackage{mathtools}
\usepackage{pdfpages}

\newcommand{\inn}{\operatorname{in}}

\newcommand{\transpose}{^T}
\newcommand{\Eins}{\mathbf 1}
\newcommand{\ess}{\operatorname{ess}}
\newcommand{\sgn}{\operatorname{sgn}}
\newcommand{\Id}{\mathbf{Id}}

\newcommand{\init}{\operatorname{init}}
\newcommand{\ter}{\operatorname{ter}}
\newcommand{\out}{\operatorname{out}}
\renewcommand{\div}{\operatorname{div}}

\newtheoremstyle{theorem_fett}
 {\topsep}                    
    {\topsep}                    
    {\itshape}                   
    {}                           
    {}                   
    {.}                          
    {.5em}                       
  {{\bfseries \thmname{#1}\thmnumber{ #2}\thmnote{ (#3)}}}
  \theoremstyle{theorem_fett}
\newtheorem{theorem}{Theorem}
\newtheorem{lemma}[theorem]{Lemma}
\theoremstyle{remark}
\newtheorem{remark}[theorem]{Remark}
\theoremstyle{definition}
\newtheorem{Def}[theorem]{Definition}

\usepackage{hyperref}
\title[Transport equations on a network]{Well-posedness of space and time dependent transport equations on a network}
\author{Arne Roggensack}
\address{Weierstrass Institute, Mohrenstr. 39, 10117 Berlin, Germany}
\email{arne.roggensack@wias-berlin.de}
\subjclass[2010]{Primary 35R02; Secondary 35F46, 35L50, 35L65.}
\keywords{Transport equation, PDE on a network, coupled boundary value problem, renormalization property, existence and uniqueness}
\date{\today}

\begin{document}
\begin{abstract}
This article is concerned with the study of weak solutions of a linear transport equation on a bounded domain with coupled boundary data for general non smooth space and time dependent velocity fields. The existence of solutions, its uniqueness and the continuous dependence of the solution on the initial and boundary data as well on the velocity is proven. The results are based on the renormalization property. At the end, the theory is shown to be applicable to the continuity equation on a network.
\end{abstract}

\maketitle

\section{Introduction}
\label{chap:transport}
In this article, we study linear transport equations on bounded domains with coupled boundary values and prove their well-posedness. As a main application we have in mind transport phenomena on a network which occur in various fluid dynamical applications. In that case, on each edge of a graph a linear transport equation is imposed. The equations are coupled at the nodes by linear boundary conditions mapping from the outflow part of the boundary to the inflow part. From an abstract point of view, one can see this as a multivariate transport equation on a one-dimensional domain with coupled boundary data
\begin{align*}
\notag
	\rho_t+(\mathbf U\rho)_x+\mathbf C\rho&=f&\text{in }(0,T)\times\Omega\\
	\rho(0,\cdot)&=\rho_0&\text{in }\Omega\\
	(\nu \mathbf U)^-\rho&=(\nu \mathbf U)^-\mathcal H(\rho|_{\Gamma_T})\quad&\text{on }\Gamma_T.
\end{align*} 

We will prove existence, boundedness, uniqueness, and stability of solutions $\rho\in C([0,T],L^p((0,1))^n)$ for non-smooth matrix-valued velocities $\mathbf U=\operatorname{diag}(u)$ with $u\in L^1((0,T),W^{1,1}((0,1))^n)$ and $u_x\in L^1((0,T),L^{\infty}((0,1))^n)$ without any restrictions on the sign of the velocity. In the Lipschitz continuous setting, every change of the sign of the velocity implies a change of the direction of a characteristic. This can lead to additional difficulties since it has two effects:

 First, even if the data is smooth, there does not have to exist a classical smooth solution as the domain is bounded. For example, by the method of characteristics the solution of
\begin{equation*}
\rho_t+\left((2t-1)\rho\right)_x=0
\end{equation*}
on $(0,1)\times(0,1)$ is given by
\begin{equation*}
\rho(t,x)=\begin{cases}
\rho_0\left(x-t(t-1)\right)&\text{if }t<\frac 12\text{ or }x\geq \left(t-\frac12\right)^2\\
\rho_{\inn}\left(\frac 12+\sqrt{\left(t-\frac12\right)^2-x}\right)&\text{otherwise,}
\end{cases}
\end{equation*}
which has possibly a discontinuity if no further conditions are imposed. The only way to get smooth solutions is to require additional conditions on the initial and boundary data $\rho_0$ and $\rho_{\inn}$, which depend on the characteristics and thus on the velocity. In many applications however, the velocity is not known a priori.

The second issue concerns the coupled boundary data. In this case, we can compute the characteristics, but due to the coupling conditions, we are not able to use them directly to find a solution if the velocity has an infinite number of changes of the sign in a finite time. On a network, this can lead to a characteristic of the form of an infinite tree. Each time the characteristic intersects an inner node it divides itself into multiple parts.

For these reasons we will use a different and very general approach looking for weak solutions. This approach is based on the concept of the renormalization property, which was introduced in 1989 by Lions and DiPerna \cite{DiPerna.1989} for tangential velocity fields. This concept was extended to bounded domains in $\mathbb R^N$ with inflow boundary conditions and velocity fields with a kind of Sobolev regularity by Boyer \cite{boyer2005} in 2005 and Boyer and Fabrie \cite{Boyer.2012}. Recently, these results were generalized to velocity fields with $BV$ regularity by Crippa et al.\ \cite{Crippa.2013a,Crippa.2013b} and to stochastic differential equations by Neves and Olivera \cite{Neves.2014}. For constant velocity fields, the transport equation on a network was studied by Sikolya in her dissertation \cite{Sikolya2004} in 2004. In 2008, this was expanded to infinite networks by Dorn \cite{Dorn.2008, Dorn.2008a}. Especially, the long time behaviour was analysed by a semigroup approach.

In this article, we generalize the results of Boyer and Fabrie to the case of coupled boundary data. In particular, we will study a vector valued transport equation with affine linear coupled boundary conditions and a time and space dependent velocity field.

Our results for the transport equation can e.g.\ be applied to study the existence of solutions of a low Mach number model on a network. Details to this topic can be found in a forthcoming paper or in the PhD thesis \cite{RoggensackDiss.2014}. However, transport equations on networks occur also in various other contexts as traffic flow \cite{Holden.1995,garavello2006}, gas flow in pipe lines \cite{Banda.2006} or river flow \cite{cunge1980,roggensack2013}
 
 The article is divided in six sections. In Section \ref{sec:transport_assump}, we introduce the notations and requirements. Section \ref{sec:uniqueness} recalls the renormalization property and it provides uniqueness and boundedness results. In section \ref{sec:cont_depend}, we prove the continuous dependence of the solution on the data before we show the existence of a solution in Section \ref{sec:existence}. In the last section, we illustrate how these general results can be applied to the continuity equation on a network.
\section{Assumptions and requirements}
\label{sec:transport_assump}
In this section, we introduce the precise setting. For simplicity, we will only consider the one-dimensional domain $\Omega=(0,1)$ and denote the boundary $\Gamma=\partial \Omega=\{0,1\}$ and $\Gamma_T=[0,T]\times\Gamma$. Since we have in mind to consider one-dimensional edges of a graph this is no restriction for the application. The outer normal vector on $\Gamma$ is called $\nu$ and is given by
$\nu=-(-1)^{\omega}$
for $\omega\in\Gamma$. 	We use the integral notation $\int_{\Gamma}$ although the boundary consists only of two single points, i.e.\ $\mathrm d\omega$ is the counting measure. An advantage of this notation is the easy generalization to higher dimensions. All considerations in this article could also be done for the case of bounded multidimensional domains $\Omega$ with Lipschitz boundaries.

The aim of this article is to find and characterize a solution $\rho(t,x)\in \mathbb R^n$ of the problem
\begin{align}
\notag
	\rho_t+(\mathbf U\rho)_x+\mathbf C\rho&=f&\text{in }(0,T)\times\Omega\\
	\label{eq:transport_equation}
	\rho(0,\cdot)&=\rho_0&\text{in }\Omega\\
	\notag
	(\nu \mathbf U)^-\rho&=(\nu \mathbf U)^-\mathcal H(\rho|_{\Gamma_T})\quad&\text{on }\Gamma_T
\end{align} 
with initial conditions $\rho_0\in L^{\infty}(\Omega)^n$ with $\rho_0\geq 0$ almost everywhere and with an affine linear boundary operator $\mathcal H$. We assume
\begin{align}
	u&\in L^1((0,T),W^{1,1}(\Omega)^{n}),\notag\\
	c&\in L^1((0,T)\times\Omega)^n,\notag\\
	(u_x+c)^-&\in L^1((0,T),L^{\infty}(\Omega)^n),\label{assump:ucf}\\
	(u_x)^+&\in L^1((0,T),L^{\infty}(\Omega)^n)\notag\\
		\shortintertext{and}
	f&\in L^1((0,T),L^{\infty}(\Omega)^n)\notag
\end{align}
with $f\geq 0$ and denote by $\mathbf U=\operatorname{diag}(u)$ and $\mathbf C=\operatorname{diag}(c)$ diagonal matrices with the diagonal entries $u^j$ and $c^j$, respectively. On $\Gamma_T$ we define component-by-component the measure
\begin{align}
\mathrm d\mu_{u}=(\nu u)\mathrm d\omega\mathrm dt
\end{align}
 and introduce its positive part $\mathrm d\mu_{u}^+=(\nu u)^+\mathrm d\omega\mathrm dt$, its negative part $\mathrm d\mu_{u}^-=(\nu u)^-\mathrm d\omega\mathrm dt$ and its absolute value $|\mathrm d\mu_{u}|=\mathrm d\mu_u^++\mathrm d\mu_u^-$. For each $j$, these measures divide the boundary $\Gamma_T$ into two parts, the inflow part $\Gamma_T^-$ with $\nu u_j<0$ and the outflow part $\Gamma_T^+$ with $\nu u_j>0$. For $p\in[1,\infty)$ the space $L^p(\Gamma_T,\mathrm d\mu_u^{\pm})=L^p(\Gamma_T,\mathbb R^n,\mathrm d\mu_u^{\pm})$ is provided with the norm
\begin{equation*}
	\|g\|_{p,w,\pm}=\left(\int_0^T\int_{\Gamma}\beta(|g|)\transpose\overline{\mathbf W}(\nu u)^{\pm}\mathrm d\omega\mathrm dt\right)^{\frac 1p}
\end{equation*}
with the function $\beta(x)=\begin{pmatrix}
x_1^p&\cdots&
x_n^p\end{pmatrix}\transpose$ and with a positive definite diagonal weight matrix $\overline{\mathbf W}\in \mathbb R^{n\times n}$. Here, the absolute value has to be understood component-by-component. We do also consider the space $L^{\infty}(\Gamma_T,\mathrm d\mu_u^{\pm})=L^{\infty}(\Gamma_T,\mathbb R^n,\mathrm d\mu_u^{\pm})$ equipped with the maximum norm of the componentwise $L^{\infty}$-norm, i.e.\ 
\begin{equation*}
	\|g\|_{\infty,\pm}=\max_j\left(\ess\sup_{\substack{{(\omega,t)\in\Gamma_T}\\{u^j(t,\omega)^{\pm}\neq 0}}}|g^j|\right).
\end{equation*}

The boundary operator $\mathcal H$ has to assign a boundary value to the inflow part of the boundary, i.e.\ $\mathcal H$ is a mapping
\begin{equation*}
\mathcal H\colon L^{\infty}(\Gamma_T,\mathrm d\mu_{u}^+)\to L^{\infty}(\Gamma_T,\mathrm d\mu_u^-).
\end{equation*}
We assume the mapping to be affine linear, i.e.\ it is given as $\mathcal H(\rho)=\rho_{\inn}+\mathcal G(\rho)$ where it is $\rho_{\inn}\in L^{\infty}(\Gamma_T,\mathrm d\mu_u^-)$ with $\rho_{\inn}(t,x)\geq 0$ for $\mathrm d\mu_u^-$-almost all $(t,\omega)$. Here, $\mathcal G$ is a linear operator fulfilling the following conditions:
\begin{itemize}
	\item[-] The operator $\mathcal G\colon L^{\infty}(\Gamma_T,\mathrm d\mu_u^+)\to L^{\infty}(\Gamma_T,\mathrm d\mu_u^-)$ is weakly-$\star$ continuous, i.e.\ for all $\rho_n\in L^{\infty}(\Gamma_T,\mathrm d\mu_u^+)$ with $\rho_n\xrightharpoonup{\star}\rho$ in $L^{\infty}(\Gamma_T,\mathrm d\mu_u^+)$ it holds
	\begin{equation*}
	\mathcal G(\rho_n)\xrightharpoonup{\star}\mathcal G(\rho)
	\end{equation*}
	in $L^{\infty}(\Gamma_T,\mathrm d\mu_u^-)$.
	\item[-] There is a positive definite diagonal weight matrix $\overline{\mathbf W}\in \mathbb R^{n\times n}$ such that the $L^1$-operator norm of $\mathcal G$ is less or equal one, i.e.\  for all $\rho\in L^{\infty}(\Gamma_T,\mathrm d\mu_u^+)$ it holds
	\begin{equation*}
		\|\mathcal G(\rho)\|_{1,w,-}\leq \|\rho\|_{1,w,+}.
	\end{equation*}
	\item[-] >The operator $\mathcal G$ is causal, i.e.\ for all $\rho\in L^{\infty}(\Gamma_T,\mathrm d\mu_u^+)$ and almost all $t\in[0,T]$ it holds
	\begin{equation}
	\label{eq:G_chi}
		\chi_{[0,t]}\mathcal G(\chi_{[0,t]}\rho)=\chi_{[0,t]}\mathcal G(\rho).
	\end{equation}
	\item[-]
	The operator $\mathcal G$ is $\mathrm d\mu_u^+$-almost everywhere positive, i.e.\  it holds $\mathcal G(\rho)\geq 0$ $\mathrm d\mu_u^-$-almost everywhere  for all $\rho\in L^{\infty}(\Gamma_T,\mathrm d\mu_u^+)$ with $\rho\geq 0$ $\mathrm d\mu_u^+$-almost everywhere.
	\item[-]
	There exists a constant vector $\rho_{\max}\in \mathbb R^n$ such that component-by-component the inequalities
	\begin{equation}
	\label{eq:exist_M}
		\rho_{\inn}\exp\left(-\int_0^t\alpha(s)\mathrm ds\right)-F_++\mathcal G(F_+)+\mathcal G(\rho_{\max})\leq \rho_{\max}
	\end{equation}
	for $\mathrm d\mu_u^-$-almost all $(t,\omega)\in \Gamma_T$ and
	\begin{equation*}
		\rho_0\leq \rho_{\max}
	\end{equation*}
	for almost all $x\in \Omega$ are valid. Here, it is $\alpha(s)=\|(u_x(s,\cdot)+c(s,\cdot))^-\|_{L^{\infty}(\Omega)^n}$ and 
	\begin{equation*}
	F_+(t)=\int_0^t\exp\left(-\int_0^s\alpha(r)\mathrm dr\right)\begin{pmatrix}
	\|(f_1(s,\cdot))^+\|_{L^{\infty}}\\
	\vdots\\
		\|(f_n(s,\cdot))^+\|_{L^{\infty}}
	\end{pmatrix}\mathrm ds.
	\end{equation*}
\end{itemize}
As we will see later, this construction of $\mathcal H$ is motivated by the transport equation on a network.
 
To conclude this section, we observe three properties of the operator $\mathcal G$.
First, for all $\rho\in L^{\infty}(\Gamma_T,\mathrm d\mu_u^+)$ and all $t\in[0,T]$ the inequality
\begin{equation}
\begin{split}
\label{eq:observation}
	\int_0^t\int_{\Gamma}|\mathcal G(\rho)|\transpose\overline{\mathbf W}(\nu u)^-\mathrm d\omega\mathrm dt&=\int_0^T\int_{\Gamma}\chi_{[0,t]}|\mathcal G(\rho)|\transpose\overline{\mathbf W}(\nu u)^-\mathrm d\omega\mathrm dt\\
	&=\int_0^T\int_{\Gamma}|\mathcal G(\chi_{[0,t]}\rho)|\transpose\overline{\mathbf W}(\nu u)^-\mathrm d\omega\mathrm dt\\
	&=\|\mathcal G(\chi_{[0,t]}\rho)\|_{1,w,-}\\
	&\leq\|\chi_{[0,t]}\rho\|_{1,w,+}\\
	&= \int_0^T\int_{\Gamma}|\chi_{[0,t]}\rho|\transpose\overline{\mathbf W}(\nu u)^+\mathrm d\omega\mathrm dt\\
	&=\int_0^t\int_{\Gamma}|\rho|\transpose\overline{\mathbf W}(\nu u)^+\mathrm d\omega\mathrm dt
\end{split}
\end{equation}
is true.
Second, the equation \eqref{eq:G_chi} holds in fact for all essential bounded functions $g\in L^{\infty}([0,T])$: Let $\rho\in L^{\infty}(\Gamma_T,\mathrm d\mu_u^+)$, $\rho\neq 0$ and $\varepsilon>0$ be given. Since the step functions are dense in $L^{\infty}([0,T])$ we can choose a step function $g_n=\sum_{k=1}^n a_k\chi_{[t_k,t_{k+1}]}$ with
\begin{equation*}
	\|g-g_n\|_{L^{\infty}([0,T])}<\frac{\varepsilon}{2\|\rho\|_{1,w,+}}.
\end{equation*}
Then, because of $\mathcal G(f_n\rho)=f_n\mathcal G(\rho)$ it follows
\begin{align*}
\|\mathcal G(g\rho)-g\mathcal G(\rho)\|_{1,w,-}&\leq \|\mathcal G((g-g_n)\rho)\|_{1,w,-}+\|(g-g_n)\mathcal G(\rho)\|_{1,w,-}\\
&\leq\|(g-g_n)\rho\|_{1,w,+}+\|g-g_n\|_{L^{\infty}([0,T])}\|\mathcal G(\rho)\|_{1,w,-}\\
&\leq \|g-g_n\|_{L^{\infty}([0,T])}\|\rho\|_{1,w,+}+\|g-g_n\|_{L^{\infty}([0,T])}\|\rho\|_{1,w,+}\\
&<\varepsilon
\end{align*}
and thus 
\begin{equation}
\label{eq:G_continuous}
	\mathcal G(g\rho)=g\mathcal G(\rho)
\end{equation}
holds $\mathrm d\mu_u^-$-almost everywhere.

The third observation concerns the positivity of the operator. Using the positivity of $\mathcal G$ and the triangle inequality, we estimate for $\rho\in L^{\infty}(\Gamma_T,\mathrm d\mu_u^+)$
\begin{equation}
\label{obs:third}
\begin{split}
\left(\mathcal G(\rho)\right)^+&=\frac 12\left(\mathcal G(\rho)+\left|\mathcal G(\rho)\right|\right)\\
&=\frac 12\left(\mathcal G(\rho^+)-\mathcal G(\rho^-)+\left|\mathcal G(\rho^+)-\mathcal G(\rho^-)\right|\right)\\
&\leq\frac 12\left(\mathcal G(\rho^+)-\mathcal G(\rho^-)+\left|\mathcal G(\rho^+)\right|+\left|\mathcal G(\rho^-)\right|\right)\\
&=\mathcal G(\rho^+)
\end{split}
\end{equation}
almost everywhere in $\Gamma_T$.
In the same way, we can prove that
\begin{equation}
\label{osb:third_a}
\left(\mathcal G(\rho)\right)^-\leq\mathcal G(\rho^-)
\end{equation}
holds.
In the following, we often extend functions in $L^p(\Gamma_T,\mathrm d\mu_u^{\pm})$ by zero to construct functions in $L^p(\Gamma_T,|\mathrm d\mu_u|)$ or $L^p(\Gamma_T)^n$. Especially, we extend, if necessary, the operators $\mathcal G$ and $\mathcal H$ to operators mapping from $L^{\infty}(\Gamma_T,|\mathrm d\mu_u|)\to L^{\infty}(\Gamma_T,|\mathrm d\mu_u|)$.

\section{Uniqueness and Boundedness}
\label{sec:uniqueness}
In this section, we will prove the uniqueness and boundedness of solutions of the initial-boundary-value problem \eqref{eq:transport_equation}. To this end, we first introduce the concept of weak solutions before we discuss regularity and the renormalization property of the solutions. Equipped with this, we are able to prove uniqueness and boundedness. For the latter, we will give explicit lower and upper bounds. 

We start with the usual definition for a weak solution.
\begin{Def}
	We call a function
	\begin{equation*}
		\rho\in L^{\infty}([0,T]\times\Omega)^n
	\end{equation*}
	a \emph{weak solution of the transport equation} $\rho_t+(\mathbf U\rho)_x+\mathbf C\rho=f$ if it holds
	\begin{equation*}
	0=\int_0^T\int_{\Omega}\rho\transpose\left(\varphi_t+\mathbf U\varphi_x -\mathbf C\varphi\right)+f\transpose\varphi\mathrm dx\mathrm dt 
	\end{equation*}
	for all $\varphi\in C^{0,1}([0,T]\times\overline{\Omega},\mathbb R^n)$ with $\varphi(0,\cdot)=\varphi(T,\cdot)=0$ and $\varphi=0$ on $\Gamma_T$.
\end{Def}
A priori, it is not clear how to define boundary conditions for this class of weak solutions since $\rho$ is not a Sobolev function and thus, the usual trace theory is not applicable.

In \cite{Boyer.2012}, Boyer has proven some properties of such weak solutions for scalar equations, especially the existence of a trace and the renormalization property of the solution (see the following theorem).
\begin{theorem}[Trace theorem\normalfont{, Boyer \cite{Boyer.2012}}]
	\label{theo:trace}
	Let $\Omega\subset \mathbb R^d$ be a $d$-dimensional bounded Lipschitz domain and let be $u\in L^1((0,T),W^{1,1}(\Omega)^d)$ and $c,f\in L^1((0,T)\times\Omega)$ with $(c+u_x)^-\in L^1((0,T),L^{\infty}(\Omega))$ and $(u_x)^+\in L^1((0,T),L^{\infty}(\Omega))$. Then, for each weak solution $\rho\in L^{\infty}((0,T)\times\Omega)$ of the scalar transport equation
	\begin{equation*}
	\rho_t+\div(u\rho)+c\rho=f
	\end{equation*}
	the following properties hold:
	\begin{description}
		\item[\textbf{Time continuity}] The function $\rho$ lies in $C([0,T],L^p(\Omega))$ for all $p\in[1,\infty)$.
		\item[\textbf{Existence and uniqueness of a trace function}] There exists a unique essentially bounded function $\gamma\rho\in  L^{\infty}(\Gamma_T,|\mathrm d\mu_u|)$, called \emph{trace}, such that for any $[t_0,t_1]\subset[0,T]$ and for all test functions $\varphi\in C^{0,1}([0,T]\times\overline{\Omega})$
		\begin{equation}
		\label{eq:trace_exist}
\begin{split}
			0&=\int_{t_0}^{t_1}\int_{\Omega}\rho(\varphi_t+u\transpose\nabla\varphi-c\varphi)+f\varphi\mathrm dx\mathrm dt-\int_{t_0}^{t_1}\int_{\Gamma}\gamma\rho\varphi(u\transpose\nu)\mathrm d\omega\mathrm dt\\
			&+\int_{\Omega}\rho(t_0)\varphi(t_0)\mathrm dx-\int_{\Omega}\rho(t_1)\varphi(t_1)\mathrm dx
\end{split}
		\end{equation}
		holds.
		\item[\textbf{Renormalization property}]
		For any continuous  and piecewise $C^1$ function $\beta$ the renormalization property holds, i.e.\ $\rho$ satisfies for any $[t_0,t_1]\subset[0,T]$ and for all test functions $\varphi\in C^{0,1}([0,T]\times\overline{\Omega})$
		\begin{equation}
		\label{eq:renorm_prop}
			\begin{split}
			0&=\int_{t_0}^{t_1}\int_{\Omega}\beta(\rho)(\varphi_t+u\transpose\nabla\varphi)\mathrm dx\mathrm dt-\int_{t_0}^{t_1}\int_{\Omega}\beta'(\rho)\left(\rho c-f\right)\varphi\mathrm dx\mathrm dt\\
			&-\int_{t_0}^{t_1}\int_{\Omega}\varphi \operatorname{div}(u)(\beta'(\rho)\rho-\beta(\rho))\mathrm dx\mathrm dt -\int_{t_0}^{t_1}\int_{\Gamma}\beta(\gamma\rho)(u\transpose\nu)\varphi\mathrm d\omega\mathrm dt\\
			&+\int_{\Omega}\beta(\rho(t_0))\varphi(t_0)\mathrm dx-\int_{\Omega}\beta(\rho(t_1))\varphi(t_1)\mathrm dx.
			\end{split}
		\end{equation}
	\end{description}
	\end{theorem}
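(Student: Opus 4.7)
The approach is the standard DiPerna--Lions program adapted to a bounded Lipschitz domain as in Boyer's work. The plan is to mollify $\rho$ in the spatial variables (keeping time as a parameter) to obtain $\rho_\varepsilon=\rho\ast\eta_\varepsilon$, convolve the equation, and rewrite it as
\begin{equation*}
\partial_t\rho_\varepsilon+\div(u\rho_\varepsilon)+c\rho_\varepsilon=f_\varepsilon+r_\varepsilon,
\end{equation*}
where $r_\varepsilon=\div(u\rho)_\varepsilon-\div(u\rho_\varepsilon)+(c\rho_\varepsilon-(c\rho)_\varepsilon)$ is the commutator term. The first thing to do inside $\Omega$ is to prove the commutator estimate $r_\varepsilon\to 0$ in $L^1_{\mathrm{loc}}((0,T)\times\Omega)$, which is the heart of DiPerna--Lions and relies on $u\in L^1_tW^{1,1}_x$ together with $\rho\in L^\infty$. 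Since $\rho_\varepsilon$ is smooth in $x$, the chain rule applies and, after passing to the limit $\varepsilon\to 0$, one obtains the interior renormalized equation for any $\beta\in C^1$ (and then piecewise $C^1$ by approximation).

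Next I would upgrade this interior statement to include boundary and time data. For time continuity, I would apply the interior renormalized equation with $\beta(r)=r^2$ and a test function independent of $x$ on a compact subdomain, then exhaust $\Omega$ to obtain that $t\mapsto\int_\Omega\beta(\rho)\varphi\,dx$ is absolutely continuous; this gives weak continuity $\rho\in C([0,T],L^2_w)$, and combined with continuity of the $L^2$-norm (again via the renormalization identity with $\beta(r)=r^2$) one promotes this to strong continuity in $L^2$, hence in every $L^p$ by the $L^\infty$ bound. For the trace, I would use the Lipschitz structure of $\partial\Omega$ to flatten the boundary locally and introduce a tubular-neighborhood parameter $h>0$, testing the equation in a $h$-neighborhood of $\Gamma$. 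Integrating the renormalized equation against a test function that is one near $\Gamma$ and letting $h\to 0$ produces, using the weak-$\ast$ compactness in $L^\infty(\Gamma_T,|d\mu_u|)$, a boundary integral of the form $\int_{\Gamma_T}\gamma\rho\,\varphi\,(\nu^\top u)\,d\omega\,dt$. The identification step, which yields \eqref{eq:trace_exist}, uses that the inner integrals converge because of the just-established time continuity of $\rho$.

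Uniqueness of the trace follows from the density of $\{(\nu^\top u)\varphi:\varphi\in C^{0,1}([0,T]\times\overline\Omega)\}$ in a space separating points of $L^\infty(\Gamma_T,|d\mu_u|)$, which is a straightforward consequence of a partition-of-unity argument on $\Gamma_T$ combined with the non-degeneracy of the measure $|d\mu_u|$ on its support. Once the trace exists, the boundary integral in the renormalized identity \eqref{eq:renorm_prop} is defined as $\int\beta(\gamma\rho)(u^\top\nu)\varphi$ because the trace of the mollified solution $\rho_\varepsilon$ converges weakly-$\ast$ in $L^\infty(\Gamma_T,|d\mu_u|)$ and $\beta$ is continuous; here the $L^\infty$ bound on $\gamma\rho$ (uniform in $\varepsilon$) is crucial to pass to the limit in the nonlinear boundary term.

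I expect the main technical obstacle to be the commutator/renormalization step under the very weak regularity assumption $u\in L^1_t W^{1,1}_x$, rather than $W^{1,p}$ with $p>1$; this forces the use of Lusin-type estimates on the maximal function of $\nabla u$ and is exactly where the $L^\infty$-bound on $\rho$ is indispensable. The secondary obstacle is the boundary-trace argument: the velocity field may vanish or change sign on $\Gamma$, so the integrals against $d\mu_u^\pm$ must be controlled by a careful choice of the boundary-flattening diffeomorphism together with the decomposition $\nu^\top u=(\nu^\top u)^+-(\nu^\top u)^-$, as the positive and negative parts have to be treated independently to produce an unambiguous inflow/outflow trace.
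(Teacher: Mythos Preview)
The paper does not prove this theorem at all: its proof consists entirely of the citation ``See \cite{Boyer.2012}.'' Your sketch is therefore not to be compared against anything the paper does, but against Boyer's original argument, and it is indeed a reasonable outline of that DiPerna--Lions/Boyer program.

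Two technical remarks on your sketch. First, the commutator estimate for $u\in L^1_tW^{1,1}_x$ and $\rho\in L^\infty$ does \emph{not} require Lusin-type or maximal-function arguments; the original DiPerna--Lions lemma already handles $W^{1,1}$ velocity fields by a direct change of variables in the convolution and dominated convergence. Maximal-function machinery enters only for $BV$ fields (Ambrosio) or for quantitative Lagrangian estimates (Crippa--De~Lellis), neither of which is needed here. Second, your trace construction via ``the trace of the mollified solution $\rho_\varepsilon$'' is not quite how Boyer proceeds: spatial mollification degrades near $\partial\Omega$, so Boyer instead tests the weak formulation against functions concentrated in a tubular $h$-neighborhood of $\Gamma$ and shows that the resulting boundary-layer averages of $\rho$ have a weak-$\ast$ limit in $L^\infty(\Gamma_T,|d\mu_u|)$ directly, without passing through a mollified trace. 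Your description of the $h\to 0$ limit is closer to this, but the subsequent sentence about $\rho_\varepsilon$'s boundary trace converging is the wrong mechanism for identifying $\beta(\gamma\rho)$; the correct step is to apply the layer-average argument a second time to the renormalized interior equation for $\beta(\rho)$ and show that the trace so obtained coincides with $\beta$ applied to the already-constructed $\gamma\rho$.
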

	\begin{proof} See \cite{Boyer.2012}.
	\end{proof}
	\begin{remark}
		A component-by-component consideration shows the validity of Theorem~\ref{theo:trace} for vector-valued equations with $n>1$. The renormalization property then reads:
		
		There exists a trace $\gamma\rho\in L^{\infty}(\Gamma_T,|\mathrm d\mu_u|)$ such that for any continuous and piecewise $C^1$ function $\beta\colon\mathbb R^n\to\mathbb R^n$ with $D\beta$ diagonal, for any $[t_0,t_1]\subset[0,T]$ and for any test function $\varphi\in C^{0,1}([0,T]\times\overline{\Omega},\mathbb R^n)$ it holds
		\begin{align*}
			0&=\int_{t_0}^{t_1}\int_{\Omega}\beta(\rho)\transpose(\varphi_t+\mathbf U\varphi_x)\mathrm dx\mathrm dt-\int_{t_0}^{t_1}\varphi\transpose\left(\mathbf C D\beta(\rho)\rho-D\beta(\rho)f\right)\mathrm dx\mathrm dt\\
			&-\int_{t_0}^{t_1}\int_{\Omega}\varphi\transpose\mathbf U_x(D\beta(\rho)\rho-\beta(\rho))\mathrm dx\mathrm dt -\int_{t_0}^{t_1}\int_{\Gamma}\beta(\gamma\rho)\transpose(\nu\mathbf U)\mathrm \varphi d\omega\mathrm dt\\
			&+\int_{\Omega}\beta(\rho(t_0))\transpose\varphi(t_0)\mathrm dx-\int_{\Omega}\beta(\rho(t_1))\transpose\varphi(t_1)\mathrm dx.
		\end{align*}
	\end{remark}
	This theorem is no existence result, it only classifies solutions. 
	But due to the continuity of $\rho$ with values in $L^p(\Omega)$ and the existence of the trace we can define a solution of the initial-boundary-value problem as following:
	\begin{Def}
		The function
		\begin{equation*}
			\rho\in L^{\infty}((0,T)\times\Omega)^n
		\end{equation*}
		is called a solution of the initial-boundary-value problem \eqref{eq:transport_equation} if and only if
		\begin{itemize}
			\item[-] $\rho$ is a weak solution of the transport equation,
			\item[-] the initial conditions are fulfilled, i.e.\ $\rho(0)=\rho_0$ and
			\item[-] the trace $\gamma\rho$ satisfies the boundary conditions, i.e.\ $\mathcal H(\gamma\rho)=\gamma\rho.$
		\end{itemize}

	\end{Def}
As a consequence of the renormalization property we can prove the uniqueness of the above defined solution.
	\begin{theorem}[Uniqueness]
		Under the assumptions from Section~\ref{sec:transport_assump}, there is at most one solution of the initial-boundary-value problem \eqref{eq:transport_equation}.
	\end{theorem}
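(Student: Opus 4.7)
The plan is to work with the difference of two hypothetical solutions and derive an integral Gronwall inequality via the renormalization identity. Let $\rho_1,\rho_2$ be two solutions of \eqref{eq:transport_equation}. Since the PDE is linear and $\mathcal H(\rho)=\rho_{\inn}+\mathcal G(\rho)$ is affine with linear part $\mathcal G$, the difference $\rho=\rho_1-\rho_2\in L^{\infty}((0,T)\times\Omega)^n$ is a weak solution of the homogeneous equation with $f=0$ and $\rho(0)=0$, and its trace satisfies the purely linear coupling $\gamma\rho=\mathcal G(\gamma\rho)$ $\mathrm d\mu_u^-$-almost everywhere. It suffices to show $\rho\equiv 0$.

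Next I would apply the vector-valued renormalization identity from the remark following Theorem~\ref{theo:trace} with the componentwise absolute value $\beta(\rho)=|\rho|$. This function is continuous, piecewise $C^1$, has diagonal derivative, and, being positively $1$-homogeneous, satisfies $D\beta(\rho)\rho=\beta(\rho)$, so the term involving $\mathbf U_x(D\beta(\rho)\rho-\beta(\rho))$ drops out entirely. Testing against the constant $\varphi=\overline{\mathbf W}\mathbf 1$ on $[0,t^{\ast}]$ (with $t^{\ast}\in[0,T]$ arbitrary) yields
\begin{equation*}
\int_\Omega |\rho(t^{\ast})|^\top\overline{\mathbf W}\mathbf 1\,\mathrm dx
+\int_0^{t^{\ast}}\!\!\int_\Omega (\overline{\mathbf W}\mathbf 1)^\top \mathbf C|\rho|\,\mathrm dx\,\mathrm dt
+\int_0^{t^{\ast}}\!\!\int_\Gamma |\gamma\rho|^\top\overline{\mathbf W}(\nu u)\,\mathrm d\omega\,\mathrm dt=0.
\end{equation*}

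The boundary integral is handled by the coupling: splitting $(\nu u)=(\nu u)^+-(\nu u)^-$ and using $\gamma\rho=\mathcal G(\gamma\rho)$ on the inflow, the exact chain of inequalities already carried out in \eqref{eq:observation}, applied to $g=\gamma\rho$ and combining the causality \eqref{eq:G_chi} with the assumed $L^1$-contraction $\|\mathcal G(g)\|_{1,w,-}\leq\|g\|_{1,w,+}$, gives
\begin{equation*}
\int_0^{t^{\ast}}\!\!\int_\Gamma |\gamma\rho|^\top\overline{\mathbf W}(\nu u)^-\,\mathrm d\omega\,\mathrm dt
=\int_0^{t^{\ast}}\!\!\int_\Gamma |\mathcal G(\gamma\rho)|^\top\overline{\mathbf W}(\nu u)^-\,\mathrm d\omega\,\mathrm dt
\leq \int_0^{t^{\ast}}\!\!\int_\Gamma |\gamma\rho|^\top\overline{\mathbf W}(\nu u)^+\,\mathrm d\omega\,\mathrm dt,
\end{equation*}
so the entire boundary contribution is non-negative and may be discarded. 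For the interior zeroth-order term I would note that although $c^-$ is not directly controlled by \eqref{assump:ucf}, the elementary pointwise inequality $-c=-(c+u_x)+u_x\leq (c+u_x)^-+(u_x)^+$ does put $c^-$ in $L^1((0,T),L^{\infty}(\Omega)^n)$, giving $-(\overline{\mathbf W}\mathbf 1)^\top \mathbf C|\rho|\leq\kappa(t)|\rho|^\top\overline{\mathbf W}\mathbf 1$ with $\kappa\in L^1((0,T))$. The identity then collapses to the Gronwall inequality
\begin{equation*}
\int_\Omega |\rho(t^{\ast})|^\top\overline{\mathbf W}\mathbf 1\,\mathrm dx
\leq\int_0^{t^{\ast}}\kappa(t)\int_\Omega |\rho|^\top\overline{\mathbf W}\mathbf 1\,\mathrm dx\,\mathrm dt,
\end{equation*}
which together with the positive definiteness of $\overline{\mathbf W}$ forces $\rho\equiv 0$.

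The main obstacle is the compatibility between the renormalization entropy and the boundary coupling. The contraction on $\mathcal G$ is available only as an $L^1$-estimate, so any $\beta$ of higher homogeneity (for instance $|\rho|^2$) would require a matching $L^p$-contraction that is not assumed; this is what forces the choice $\beta=|\cdot|$. The price is that the interior zeroth-order coefficient is $\mathbf C$ alone rather than the more favourable combination $\mathbf C+\mathbf U_x$ arising for higher-power entropies, and the genuinely nontrivial point is to extract $L^1_tL^{\infty}_x$ control of $c^-$ from the mixed hypothesis on $(c+u_x)^-$ and $(u_x)^+$.
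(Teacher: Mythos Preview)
Your proposal is correct and follows essentially the same route as the paper: take the difference, apply the vector-valued renormalization identity with $\beta(s)=|s|$ and the constant test function $\varphi=\overline{\mathbf W}\mathbf 1$, discard the boundary term via the causal $L^1$-contraction \eqref{eq:observation}, and close with Gronwall after bounding $c^-\le(u_x+c)^-+(u_x)^+$. Your observation that the $L^1$ nature of the contraction on $\mathcal G$ forces the choice $\beta=|\cdot|$, and that this in turn makes the $\mathbf U_x$ term vanish by $1$-homogeneity, is exactly the mechanism the paper exploits (though it does not spell it out).
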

	\begin{proof}
		Let  $\rho^1$ and $\rho^2$ be two solutions of \eqref{eq:transport_equation} with its traces $\gamma\rho_1$ and $\gamma\rho_2$ and define $\rho=\rho^1-\rho^2$. Then, $\rho$ is a weak solution of the homogeneous transport equation.
		Because of the uniqueness of the trace, $\gamma\rho=\gamma\rho_1-\gamma\rho_2$ is the trace of $\rho$. Thus, we see that $\rho$ is a solution of the following homogeneous initial-boundary-value problem
		\begin{align*}
			\rho_t+(\mathbf U\rho)_x+\mathbf C\rho&=0&&\text{in }(0,T)\times\Omega\\
			\rho(0,\cdot)&=0&&\text{in }\Omega\\
			(\nu \mathbf U)^-\rho&=(\nu \mathbf U)^-\mathcal G(\rho|_{\Gamma_T})\ &&\text{on }\Gamma_T.
		\end{align*}
		Using the renormalization property for $\beta(s)=|s|$ and $\varphi=\overline{\mathbf W}\Eins$ with $\Eins=\begin{pmatrix}
				1&\cdots&1
				\end{pmatrix}\transpose$ we conclude for all $t\in [0,T]$
		\begin{align*}
		0&=-\int_{\Omega}\Eins\transpose\overline{\mathbf W}\beta(\rho(t))\mathrm dx-\int_0^{t}\int_{\Omega}\Eins\transpose\overline{\mathbf W}\mathbf C\beta(\rho)\mathrm dx\mathrm dt\\
		&\quad-\int_0^{t}\int_{\Gamma}\Eins\transpose\overline{\mathbf W}(\nu\mathbf U)\beta(\gamma\rho)\mathrm d\omega \mathrm dt\\
		&=-\|\overline{\mathbf W}\rho(t)\|_{L^{1}(\Omega)^n}-\int_0^t\int_{\Omega}\Eins\transpose\overline{\mathbf W}\mathbf C|\rho|\mathrm dx\mathrm dt\\
		&\quad-\int_0^t\int_{\Gamma}|\gamma\rho|\transpose\overline{\mathbf W}(\nu u)^+- |\mathcal G(\gamma\rho)|\transpose\overline{\mathbf W}(\nu u)^-\mathrm d\omega\mathrm dt.
		\end{align*}
		With inequality \eqref{eq:observation} we get
		\begin{equation*}
		\|\overline{\mathbf W}\rho(t)\|_{L^{1}(\Omega)^n}\leq -\int_0^{t}\int_{\Omega}\Eins\transpose\overline{\mathbf W}\mathbf C|\rho|\mathrm dx\mathrm dt
		\end{equation*}
		and thus
		\begin{align*}
			\|\overline{\mathbf W}\rho(t)\|_{L^{1}(\Omega)^n}&\leq\int_0^{t}\|c^-\|_{L^{\infty}(\Omega)^n}\|\overline{\mathbf W}\rho\|_{L^{1}(\Omega)^n}\mathrm dt\\
			&\leq\int_0^{t}\Bigl(\|(u_x+c)^-\|_{L^{\infty}(\Omega)^n}+\|(u_x)^+\|_{L^{\infty}(\Omega)^n}\Bigr)\|\overline{\mathbf W}\rho\|_{L^{1}(\Omega)^n}\mathrm dt.
		\end{align*}
		By Gronwall's inequality we conclude $\|\overline{\mathbf W}\rho(t)\|_{L^{1}(\Omega)^n}=0$ for  all $t$ and thus $\rho=0$ almost everywhere since $\overline{\mathbf W}$ is positive definite. 
	\end{proof}
	\begin{remark}
		\label{remark:f_L1}
		We can slightly weaken the assumptions on the source term $f$. In fact, for the existence of the trace, the renormalization property and the uniqueness it is sufficient to require $f\in L^1((0,T)\times\Omega)^n$ (see \cite{Boyer.2012}).
	\end{remark}
	Now, we are able to prove an upper and lower bound of the defined solution.
	\begin{lemma}[Upper bound]
		\label{lemma:upper_bound}
		Let the assumptions from Section~\ref{sec:transport_assump} be valid. Then, a solution $\rho$ of the initial-boundary-value problem \eqref{eq:transport_equation} and its trace $\gamma\rho$ are  componentwise bounded, i.e.\ it holds for all $t\in[0,T]$
		\begin{align}
		\label{eq:upper_bound_rho}
		0&\leq \rho(t,\cdot)\leq \left(\rho_{\max}+F_+(t)\right)\exp\left(\int_0^t\alpha(s)\mathrm ds\right)\\
			\shortintertext{almost everywhere in $\Omega$ and}
			\label{eq:upper_bound_gammarho}
			0&\leq \gamma\rho(t,\omega)\leq \left(\rho_{\max}+F_{+}(t)\right)\exp\left(\int_0^t\alpha(s)\mathrm ds\right)
		\end{align}
		for $|\mathrm d\mu_{u}|$-almost all $(t,\omega)\in \Gamma_T$. Here, it is $\alpha(t)=\left\|\left(u_x(t,\cdot)+c(t,\cdot)\right)^-\right\|_{L^{\infty}(\Omega)^n}$ and \begin{equation*}
		F_{+}(t)=\int_0^t\exp\left(-\int_0^s\alpha(r)\mathrm dr\right)\begin{pmatrix}
		\|(f_1(s,\cdot))^{+}\|_{L^{\infty}}\\
		\vdots\\
		\|(f_n(s,\cdot))^{+}\|_{L^{\infty}}
		\end{pmatrix}\mathrm ds.
		\end{equation*}
	\end{lemma}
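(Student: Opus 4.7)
The plan is to reduce the upper bound to a comparison with zero by subtracting the conjectured bound. I set $A(t):=\int_0^t\alpha(s)\,ds$, $M(t):=(\rho_{\max}+F_+(t))e^{A(t)}$, and $w:=\rho-M$. Since $M$ depends only on $t$, a direct computation shows that $w$ is a weak solution of
$$w_t+(\mathbf U w)_x+\mathbf C w=\tilde f,\qquad \tilde f:=f-\phi-(u_x+c+\alpha)(\rho_{\max}+F_+)e^A,$$
where $\phi^j(t):=\|(f^j(t,\cdot))^+\|_{L^\infty}$. The source $\tilde f$ is non-positive componentwise, since $f\leq\phi$ by definition of $\phi$, $u_x+c+\alpha\geq 0$ by the very definition of $\alpha$, and $\rho_{\max}+F_+\geq 0$ (the assumption $\rho_0\leq\rho_{\max}$ together with $\rho_0\geq 0$ forces $\rho_{\max}\geq 0$, and $F_+\geq 0$). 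The initial condition transforms to $w(0,\cdot)=\rho_0-\rho_{\max}\leq 0$.

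The key structural step is to check that the inflow boundary condition for $w$ takes the contractive form $\gamma w|_{\Gamma_T^-}\leq\mathcal G(\gamma w|_{\Gamma_T^+})$. Using $\mathcal H(\rho)=\rho_{\inn}+\mathcal G(\rho)$, linearity of $\mathcal G$, and the scalar multiplier identity \eqref{eq:G_continuous} to pull the factor $e^A$ out of $\mathcal G(M)$, I obtain
$$\gamma w|_{\Gamma_T^-}=\mathcal G(\gamma w|_{\Gamma_T^+})+e^A\bigl[\rho_{\inn}e^{-A}-F_+-\rho_{\max}+\mathcal G(F_+)+\mathcal G(\rho_{\max})\bigr],$$
whose bracketed term is non-positive by assumption \eqref{eq:exist_M}. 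This rewriting of the coupled boundary condition is the main technical hurdle: it is precisely here that the somewhat elaborate assumption \eqref{eq:exist_M} enters, and it is the reason for its peculiar form.

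Next I invoke the renormalization property (Theorem~\ref{theo:trace}, componentwise) on $w$ with $\beta(s)=s^+$, constant test function $\varphi=\overline{\mathbf W}\Eins$, and $[t_0,t_1]=[0,t]$. Because $D\beta(w)w=\beta(w)=w^+$, the $\mathbf U_x$-term disappears and $\beta(w(0))=0$, so the identity collapses to
\begin{equation*}
\|\overline{\mathbf W}w^+(t)\|_{L^1(\Omega)^n}+\int_0^t\!\!\int_\Gamma\beta(\gamma w)\transpose\overline{\mathbf W}(\nu u)\,d\omega\,ds=\int_0^t\!\!\int_\Omega\Eins\transpose\overline{\mathbf W}\bigl(D\beta(w)\tilde f-\mathbf C w^+\bigr)\,dx\,ds.
\end{equation*}
Splitting the boundary integral into outflow and inflow parts and applying \eqref{obs:third} together with the inflow inequality yields $(\gamma w|_{\Gamma_T^-})^+\leq\mathcal G((\gamma w|_{\Gamma_T^+})^+)$; the causal contraction estimate \eqref{eq:observation} then absorbs the inflow contribution into the outflow one, so the whole boundary term is non-negative. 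Using $D\beta(w)\tilde f\leq 0$ and the pointwise bound $-c\leq c^-\leq(u_x+c)^-+(u_x)^+$, I obtain
$$\|\overline{\mathbf W}w^+(t)\|_{L^1(\Omega)^n}\leq\int_0^t\bigl(\|(u_x+c)^-\|_{L^\infty}+\|(u_x)^+\|_{L^\infty}\bigr)\|\overline{\mathbf W}w^+(s)\|_{L^1(\Omega)^n}\,ds,$$
and Gronwall's lemma forces $w^+\equiv 0$, that is $\rho\leq M(t)$ almost everywhere in $\Omega$.

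Once $w^+$ vanishes almost everywhere in $\Omega$, so does $D\beta(w)\tilde f$, and the renormalization identity on $[0,T]$ with an arbitrary test function $\varphi\in C^{0,1}([0,T]\times\overline\Omega,\mathbb R^n)$ reduces to $\int_0^T\!\!\int_\Gamma(\gamma w)^+\transpose(\nu u)\varphi\,d\omega\,dt=0$. Varying $\varphi$ componentwise gives $(\gamma w)^+=0$ for $|d\mu_u|$-almost every $(t,\omega)$, which is the trace bound \eqref{eq:upper_bound_gammarho}. Finally, the positivity bounds $\rho\geq 0$ and $\gamma\rho\geq 0$ are proved by the same scheme applied directly to $\rho$ with $\beta(s)=s^-$: one uses $D\beta(\rho)f\leq 0$ (because $f\geq 0$), $\rho_0^-=0$, and the inequality $(\gamma\rho|_{\Gamma_T^-})^-\leq\mathcal G((\gamma\rho|_{\Gamma_T^+})^-)$ that follows from $\rho_{\inn}\geq 0$ and the observation \eqref{osb:third_a}.
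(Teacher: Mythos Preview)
Your proof is correct and follows the same overall strategy as the paper: subtract the conjectured bound, apply the renormalization property with $\beta(s)=s^+$ (resp.\ $s^-$), use the structural inequalities \eqref{eq:observation}, \eqref{obs:third}, \eqref{osb:third_a} together with assumption \eqref{eq:exist_M} to make the boundary contribution have the right sign, and close with Gronwall.

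The only noteworthy difference is the choice of auxiliary variable. The paper first multiplies by the integrating factor $r(t)=e^{-A(t)}$ and works with $\bar\rho=r\rho-F_+$, then compares to the constant $\rho_{\max}$ via $\beta(s)=(s-\rho_{\max})^+$; you instead subtract the full time-dependent bound $M(t)=(\rho_{\max}+F_+)e^{A}$ and compare $w=\rho-M$ to zero. The two are related by $w=e^{A}(\bar\rho-\rho_{\max})$. The paper's route yields a modified reaction term $\bar c=c+\alpha$ with $\bar c+u_x\geq 0$, so its Gronwall coefficient is just $\|(u_x)^+\|_{L^\infty}$; yours keeps the original $c$ and picks up $\|(u_x+c)^-\|_{L^\infty}+\|(u_x)^+\|_{L^\infty}$, which is equally harmless. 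On the other hand, because $r$ is only absolutely continuous, the paper has to approximate $r$ by Lipschitz functions $r_k$ before it can use $r\varphi$ as a test function, whereas your subtraction of $M(t)$ is handled by a single integration by parts in $t$ (valid for $M\in W^{1,1}(0,T)$ against $\varphi\in C^{0,1}$), so you legitimately bypass that approximation step. For the lower bound the same remark applies: the paper passes to $\bar\rho=r\rho$, while you work directly with $\rho$; both are fine.
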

	\begin{proof}
		Let  $\varphi\in C^{0,1}([0,T]\times\overline\Omega,\mathbb R^n)$ be an arbitrary test function with $\varphi(T)=0$. In order to prove that $r\rho$ with $r(t)=\exp\left(-\int_0^t\alpha(s)\mathrm ds\right)$ solves also a transport equation, we would like to use $\varphi r$ as a test function. Since $r$ is only absolutely continuous, but not necessarily Lipschitz continuous, we need to approximate $r$ by smoother functions. Therefore, let  $\alpha_k\in C^1([0,T])$ be a sequence converging to $\alpha$ in $L^1((0,T))$. We define
		\begin{equation*}
		r_k(t)=\exp\left(-\int_0^t\alpha_k(s)\mathrm ds\right).
		\end{equation*}
		The mean value theorem states for all $a\leq b\in \mathbb R$ the existence of $\zeta\in [a,b]$ with
		\begin{equation*}
		\exp(b)-\exp(a)=\exp(\zeta)(b-a).
		\end{equation*} 
		Thus, we find for each $t\in [0,T]$ a constant $\zeta(t)$ with 
		\begin{equation*}
		|\zeta(t)|\leq C_k:=\max\left(\|\alpha_k\|_{L^1((0,T))},\|\alpha\|_{L^1((0,T))}\right)
		\end{equation*} such that it holds
		\begin{align*}
		|r_k(t)-r(t)|&=\exp(\zeta(t))\left|\int_0^t\alpha_k(s)-\alpha(s)\mathrm ds\right|\\
		&\leq \exp(C_k)\|\alpha_k-\alpha\|_{L^1((0,T))}.
		\end{align*}
		The sequence $C_k$ is bounded as $\|\alpha_k\|_{L^1((0,T))}$ is convergent, thus $r_k\to r$ in $L^{\infty}((0,T))$. Especially, it also holds $\alpha_kr_k\to\alpha r$ in $L^1((0,T))$. Now, we will use $\varphi r_k$ as test functions in the weak formulation of the transport equation and take the limit for $k\to\infty$. Because of $(r_k)_t=-\alpha_kr_k$ this yields 
		{\allowdisplaybreaks\begin{align*}
		0&=\int_0^T\int_{\Omega}r_k\rho\transpose\left(\varphi_t+\mathbf U\varphi_x-\left(\mathbf C+\alpha_k\Id\right)\varphi\right)+r_kf\transpose\varphi\mathrm dx\mathrm dt\\*
		&\quad-\int_0^T\int_{\Gamma}r_k\gamma\rho\transpose(\nu\mathbf U)\varphi\mathrm d\omega\mathrm dt+\int_{\Omega}\rho_0\transpose\varphi(0)\mathrm dx\\
		&\to \int_0^T\int_{\Omega}r\rho\transpose\left(\varphi_t+\mathbf U\varphi_x-\left(\mathbf C+\alpha\Id\right)\varphi\right)+rf\transpose\varphi\mathrm dx\mathrm dt\\*
		&\quad-\int_0^T\int_{\Gamma}r\gamma\rho\transpose(\nu \mathbf U)\varphi\mathrm d\omega\mathrm dt+\int_{\Omega}\rho_0\transpose\varphi(0)\mathrm dx\\
		&=\int_0^T\int_{\Omega}\left(r\rho-F_+\right)\transpose\left(\varphi_t+\mathbf U\varphi_x-\left(\mathbf C+\alpha\Id\right)\varphi\right)\mathrm dx\mathrm dt\\*
		&\quad+\int_0^T\int_{\Omega}\left(rf-(F_+)_t\right)\transpose\varphi-F_+\transpose\left(\mathbf U_x+\mathbf C+\alpha\Id\right)\varphi\mathrm dx\mathrm dt\\*
		&\quad-\int_0^T\int_{\Gamma}\left(r\gamma\rho-F_+\right)\transpose(\nu \mathbf U)\varphi\mathrm d\omega\mathrm dt+\int_{\Omega}\rho_0\transpose\varphi(0)\mathrm dx.
		\end{align*}}
		Keeping in mind Remark~\ref{remark:f_L1}, this shows that $\bar \rho(t,x)=r(t)\rho(t,x)-F_+(t)$ with its trace $\gamma\bar{\rho}(t,\omega)=r(t)\gamma\rho(t,\omega)-F_+(t)$ is the unique solution of the following initial-boundary-value problem:
		\begin{align*}
			\bar{\rho}_t+(\mathbf U\bar{\rho})_x+\bar{\mathbf C}\bar \rho&=\bar f&\text{in }(0,T)\times\Omega\\
			\bar{\rho}(0,x)&=\rho_0(x)&\text{in }\Omega\\
			(\nu \mathbf U)^-\bar{\rho}&=(\nu \mathbf U)^-\bar{\mathcal H}(\bar\rho|_{\Gamma_T})&\mathrm d\mu_u^-\text{on }\Gamma_T
		\end{align*}
		with reaction term $\bar c=c+\alpha\Eins$, boundary operator $\bar{\mathcal H}(\rho)= r\rho_{\inn}-F_++\mathcal G(F_+)+\mathcal G(\rho)$ and source term $\bar f_i=rf_i-\|(rf_i)^+\|_{L^{\infty}(\Omega)}-\left((u_i)_x+\bar{c}_i\right)(F_+)_i$. For the boundary operator $\bar{\mathcal H}$ this conclusion needs further explanations. Since $\rho$ is a solution of \eqref{eq:transport_equation} we find
		\begin{align*}
		\gamma\bar{\rho}|_{\Gamma_T}&=r\gamma\rho|_{\Gamma_T}-F_+\\
		&=r\mathcal H(\gamma\rho|_{\Gamma_T})-F_+\\
		&=r\rho_{\inn}+\mathcal G(r\gamma\rho|_{\Gamma_T})-F_+\\
		&=r\rho_{\inn}-F_++\mathcal G(F_+)+\mathcal G(\gamma\bar\rho|_{\Gamma_T})\\
		&=\bar{\mathcal H}(\gamma\bar{\rho}|_{\Gamma_T}).
		\end{align*}
		The advantage of introducing $\bar \rho$ is that proving the upper bound \eqref{eq:upper_bound_rho} reduces to proving $\bar \rho\leq \rho_{\max}$, where $\rho_{\max}$ is known from the assumptions in Section~\ref{sec:transport_assump}. To this end, we define  $\beta\colon\mathbb R^n\to\mathbb R^n$ as
		\begin{equation*}
			\beta(s)=(s-\rho_{\max})^+
		\end{equation*}
		and we will show $\beta(\bar \rho)=0$ almost everywhere.
		
		For the test function $\varphi=\overline{\mathbf W}\Eins$ and all $t_0\in [0,T]$, the renormalization property yields
		\begin{equation}
		\begin{split}
		\label{eq:renormalization_max}
		&-\int_0^{t_0}\int_{\Omega}\Eins\transpose\overline{\mathbf W}\left(\bar{\mathbf C}D\beta(\bar \rho)\bar{\rho}-D\beta(\bar{\rho})\bar f\right)\mathrm dx\mathrm dt-\int_0^{t_0}\int_{\Omega}\Eins\transpose\overline{\mathbf W}\mathbf U_x(D\beta(\bar \rho)\bar \rho-\beta(\bar \rho))\mathrm dx\mathrm dt\\
		&-\int_0^{t_0}\int_{\Gamma}\beta(\gamma\bar \rho)\transpose\overline{\mathbf W}(\nu u)\mathrm d\omega\mathrm dt-\int_{\Omega}\Eins\transpose\overline{\mathbf W}\beta(\bar \rho(t_0))\mathrm dx=0
		\end{split}
		\end{equation}
		because of $\beta(\rho_0)=0$. The boundary term of this equation is non-positive. This can be shown using the componentwise monotonicity of $\beta$, the assumption \eqref{eq:exist_M} and the inequalities \eqref{eq:observation} and \eqref{obs:third}:
		\begin{align*}
			&\int_0^{t_0}\int_{\Gamma}\beta(\bar{\mathcal H}(\gamma\bar \rho))\transpose\overline{\mathbf W}(\nu u)^-\mathrm d\omega\mathrm dt\\&=\int_0^{t_0}\int_{\Gamma}\beta\left(r\rho_{\inn}-F_++\mathcal G(F_+)+\mathcal G(\gamma\bar \rho)\right)\transpose\overline{\mathbf W}(\nu u)^-\mathrm d\omega\mathrm dt\\
			&=\int_0^{t_0}\int_{\Gamma}\beta\left(r\rho_{\inn}-F_++\mathcal G(F_+)+\mathcal G(\rho_{\max})+\mathcal G(\gamma\bar \rho-\rho_{\max})\right)\transpose\overline{\mathbf W}(\nu u)^-\mathrm d\omega\mathrm dt\\
			&\leq \int_0^{t_0}\int_{\Gamma}\beta\left(\rho_{\max}+\mathcal G(\gamma\bar \rho-\rho_{\max})\right)\transpose\overline{\mathbf W}(\nu u)^-\mathrm d\omega\mathrm dt\\
			&= \int_0^{t_0}\int_{\Gamma}\left(\Bigl(\mathcal G(\gamma\bar \rho-\rho_{\max})\Bigr)^+\right)\transpose\overline{\mathbf W}(\nu u)^-\mathrm d\omega\mathrm dt\\
			&\leq\int_0^{t_0}\int_{\Gamma}\mathcal G\left(\left(\gamma\bar \rho-\rho_{\max}\right)^+\right)\transpose\overline{\mathbf W}(\nu u)^-\mathrm d\omega\mathrm dt\\
			&\leq\int_{0}^{t_0}\int_{\Gamma}\left(\left(\gamma\bar \rho-\rho_{\max}\right)^+\right)\transpose\overline{\mathbf W}(\nu u)^+\mathrm d\omega\mathrm dt\\
			&=\int_0^{t_0}\int_{\Gamma}\beta(\gamma\bar \rho)\transpose\overline{\mathbf W}(\nu u)^+\mathrm d\omega\mathrm dt.
		\end{align*}
		Because of $\beta(s)\geq 0$, $D\beta(s)s\geq 0$, $\bar c+ u_x=c+u_x+\|(c+u_x)^-\|_{L^{\infty}(\Omega)^n}\Eins\geq 0$ and $(D(\beta)\bar f)_i=D(\beta)_{ii}\left(rf_i-\|(rf_i)^+\|_{L^{\infty}(\Omega)}-((u_i)_x+\bar c_i)(F_+)_i\right)\leq 0$ we conclude from equation \eqref{eq:renormalization_max} that it holds
			\begin{equation}
			\begin{split}
			\label{eq:bound_ineq}
				\int_{\Omega}\Eins\transpose\overline{\mathbf W}\beta(\bar{\rho}(t_0))\mathrm dx
				&=-\int_0^{t_0}\int_{\Omega}\Eins\transpose\overline{\mathbf W}\left(\bar{\mathbf C}+\mathbf U_x\right)D\beta(\bar \rho)\bar \rho\mathrm dx\mathrm dt\\
				&\enskip+\int_0^{t_0}\int_{\Omega}\Eins\transpose\overline{\mathbf W}D\beta(\bar{\rho})\bar f\mathrm dx\mathrm dt+\int_0^{t_0}\int_{\Omega}\Eins\transpose\overline{\mathbf W}\mathbf U_x\beta(\bar \rho)\mathrm dx\mathrm dt\\
				&\enskip-\int_0^{t_0}\int_{\Gamma}\beta(\gamma\bar \rho)\transpose\overline{\mathbf W}(\nu u)^+\mathrm d\omega\mathrm dt +\int_0^{t_0}\int_{\Gamma}\beta(\bar{\mathcal H}(\gamma\bar \rho))\transpose\overline{\mathbf W}(\nu u)^-\mathrm d\omega\mathrm dt\\
				&\leq\int_0^{t_0}\|(u_x)^+\|_{L^{\infty}(\Omega)^n}\int_{\Omega}\Eins\transpose\overline{\mathbf W}\beta(\bar \rho)\mathrm dx\mathrm dt.
			\end{split}
			\end{equation}
		With Gronwall's inequality this leads to $\int_{\Omega}\Eins\transpose\overline{\mathbf W}\beta(\bar \rho(t_0))\mathrm dx=0$ and thus $\beta(\bar \rho(t_0))=0$ and $\bar \rho(t_0)\leq \rho_{\max}$ almost everywhere. Using the renormalization property and the uniqueness of the trace of $\beta(\bar \rho)=0$, we also conclude
		\begin{equation*}
			0=\gamma\beta(\bar{\rho})=\beta(\gamma\bar \rho)
		\end{equation*}
		and thus $\gamma\bar\rho(t,\omega)\leq \rho_{\max}$ holds for $|\mathrm d\mu_u|$-almost all $(t,\omega)$. Addition of $F$ and multiplication by $\exp\left(\int_0^t\alpha(s)\mathrm ds\right)$ shows the desired upper bound for $\rho$ and $\gamma\rho$.
		
		In order to prove the lower bound, we use the same procedure for $\bar\rho=r\rho$ and $\beta(s)=s^-$. In this case, it is $\bar c=c+\alpha\Eins$, $\bar f_i=rf_i$ and $\bar {\mathcal H}(\rho)=r\rho_{\inn}+\mathcal G(\rho)$. The only step we have to take into account again is the transition from equation \eqref{eq:renormalization_max} to inequality \eqref{eq:bound_ineq}. With the monotonicity of $\beta$, the positivity of $\rho_{\inn}$  and the inequalities \eqref{eq:observation} and \eqref{osb:third_a}, it follows
		\begin{align*}
			&\int_0^{t_0}\int_{\Gamma}\beta(\bar{\mathcal H}(\gamma\bar \rho))\transpose\overline{\mathbf W}(\nu u)^-\mathrm d\omega\mathrm dt\\
			&=\int_0^{t_0}\int_{\Gamma}\beta\left(r\rho_{\inn}+\mathcal G(\gamma\bar \rho)\right)\transpose\overline{\mathbf W}(\nu u)^-\mathrm d\omega\mathrm dt\\
			&=\int_0^{t_0}\left(\Big(r\rho_{\inn}+\mathcal G(\gamma\bar \rho)\Big)^-\right)\transpose\overline{\mathbf W}(\nu u)^-\mathrm d\omega\mathrm dt\\
			&\leq \int_0^{t_0}\int_{\Gamma}\left(\big(\mathcal G(\gamma\bar \rho)\big)^-\right)\transpose\overline{\mathbf W}(\nu u)^-\mathrm d\omega\mathrm dt\\
			&\leq\int_0^{t_0}\int_{\Gamma}\mathcal G\Big((\gamma\bar \rho)^-\Big)\transpose\overline{\mathbf W}(\nu u)^-\mathrm d\omega\mathrm dt\\
			&\leq\int_{0}^{t_0}\int_{\Gamma}\left(\left(\gamma\bar \rho\right)^-\right)\transpose\overline{\mathbf W}(\nu u)^+\mathrm d\omega\mathrm dt\\
			&=\int_0^{t_0}\int_{\Gamma}\beta(\gamma\bar \rho)\transpose\overline{\mathbf W}(\nu u)^+\mathrm d\omega\mathrm dt.
		\end{align*}
		Therefore, we can conclude as before $\bar \rho(t_0)\geq 0$ almost everywhere in $\Omega$ and $\gamma\bar\rho(t,\omega)\geq 0$ for $|\mathrm d\mu_u|$-almost all $(t,\omega)\in \Gamma_T$. This completes the proof.
	\end{proof}
	Beside the upper bound we just proved a lower bound. This lower bound can be sharpened if we ask for an additional assumption on $\mathcal G$ and use the fact $\rho\geq 0$ almost everywhere.
	\begin{lemma}[Lower bound]
		\label{lemma:lower_bound}
		In addition to the requirements from Section~\ref{sec:transport_assump}, we assume $(c+u_x)\in L^1((0,T),L^{\infty}(\Omega)^n)$. Furthermore, let $\rho_{\min}\in \mathbb R^n_{>0}$ be a vector such that it holds
		\begin{equation}
		\label{eq:exist_m}
		\rho_{\inn}\exp\left(\int_0^t\zeta(s)\mathrm ds\right)+\mathcal G(\rho_{\min})\geq \rho_{\min}
		\end{equation}
		component-by component for $\mathrm d\mu_u^-$-almost all $(t,\omega)\in \Gamma_T$ and
		\begin{equation*}
			\rho_0(x)\geq \rho_{\min}
		\end{equation*}
		for almost all $x\in \Omega$ with $\zeta(s)=\|(u_x(s,\cdot)+c(s,\cdot))^+\|_{L^{\infty}(\Omega)^n}$. Then, any solution of the initial-boundary-value problem \eqref{eq:transport_equation} is bounded from below. More precisely, it is 
		\begin{align*}
			\rho(t,\cdot)\geq \rho_{\min}\exp\left(-\int_0^t\zeta(s)\mathrm ds\right)>0\\
			\shortintertext{almost everywhere in $\Omega$ and}
			\gamma\rho(t,\omega)\geq \rho_{\min}\exp\left(-\int_0^t\zeta(s)\mathrm ds\right)>0
		\end{align*}
		for all $t\in[0,T]$ and $|\mathrm d\mu_{u}|$-almost all $(t,\omega)\in \Gamma_T$.
	\end{lemma}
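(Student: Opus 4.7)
The plan is to mirror the strategy used for the upper bound in Lemma~\ref{lemma:upper_bound}, with the rescaling and the renormalizer chosen to produce a lower bound. I would set $r(t) = \exp\left(\int_0^t \zeta(s)\mathrm ds\right)$ and $\bar\rho = r\rho$; the same smooth approximation argument used there (now applied to $\zeta\in L^1((0,T))$, which is legitimate thanks to the additional assumption $(c+u_x)\in L^1((0,T),L^\infty(\Omega)^n)$) shows that $\bar\rho$ is the unique solution of a shifted problem with data $\bar{\mathbf C} = \mathbf C - \zeta\Id$, $\bar f = rf$ and $\bar{\mathcal H}(\rho) = r\rho_{\inn} + \mathcal G(\rho)$. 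The key componentwise sign is now reversed: $\bar c + u_x = c + u_x - \zeta\Eins \leq 0$. Multiplication by $r(t)^{-1}$ in the end shows that the claim reduces to proving $\bar\rho(t,\cdot) \geq \rho_{\min}$ a.e.\ in $\Omega$, together with the analogous inequality for $\gamma\bar\rho$ on $\Gamma_T$.

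Next I would apply the vectorial renormalization property from the remark after Theorem~\ref{theo:trace} with the componentwise choice $\beta(s) = (\rho_{\min} - s)^+$ and the constant test function $\varphi = \overline{\mathbf W}\Eins$. Since $\beta(\rho_0) = 0$ by assumption, this yields an identity of exactly the form \eqref{eq:renormalization_max}. For the boundary contribution I would chain
\begin{equation*}
\beta(\bar{\mathcal H}(\gamma\bar\rho)) = (\rho_{\min} - r\rho_{\inn} - \mathcal G(\gamma\bar\rho))^+ \leq (\mathcal G(\rho_{\min} - \gamma\bar\rho))^+ \leq \mathcal G(\beta(\gamma\bar\rho)),
\end{equation*}
where the first inequality uses assumption \eqref{eq:exist_m} together with the monotonicity of $(\cdot)^+$ and the linearity of $\mathcal G$, and the second is \eqref{obs:third}. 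Combined with \eqref{eq:observation}, this shows that the inflow boundary integral is dominated by the outflow boundary integral, so the net boundary contribution is non-positive, exactly as in the upper bound argument.

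For the volume terms I would use that $\rho \geq 0$ (already provided by Lemma~\ref{lemma:upper_bound}) implies $\bar\rho \geq 0$, hence $D\beta(\bar\rho)\bar\rho \leq 0$ componentwise. Combined with $\bar c + \mathbf U_x \leq 0$, the reaction-plus-divergence contribution is non-positive, and the source contribution is non-positive because $\bar f = rf \geq 0$ and $D\beta \leq 0$. Only the term $\int_0^{t_0}\int_\Omega\Eins\transpose\overline{\mathbf W}\mathbf U_x\beta(\bar\rho)\mathrm dx\mathrm dt$ survives, and it is bounded by $\int_0^{t_0}\|(u_x)^+\|_{L^\infty(\Omega)^n}\int_\Omega\Eins\transpose\overline{\mathbf W}\beta(\bar\rho)\mathrm dx\mathrm dt$ because $\beta(\bar\rho)\geq 0$. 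Gronwall's inequality then forces $\beta(\bar\rho(t_0)) = 0$ a.e., and the trace-uniqueness argument at the end of the proof of Lemma~\ref{lemma:upper_bound} extends the bound to $\gamma\bar\rho \geq \rho_{\min}$ $|\mathrm d\mu_u|$-almost everywhere on $\Gamma_T$.

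The step I expect to be most delicate is the boundary estimate: assumption \eqref{eq:exist_m} has to be combined in precisely the right order with the linearity of $\mathcal G$ and with \eqref{obs:third} so that the chain collapses into a single term absorbable by the outflow integral via \eqref{eq:observation}. Once the signs are lined up there, the rest of the proof is essentially the upper bound argument with the reaction shift and the sign of $\beta'$ reversed, and the positivity $\rho\geq 0$ from Lemma~\ref{lemma:upper_bound} doing the work that was previously done by $D\beta(s)s \geq 0$.
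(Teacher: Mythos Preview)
Your proposal is correct and is precisely the argument the paper has in mind: the paper's own proof consists of the single sentence ``Keeping in mind the non-negativity of $\rho$, this lemma can be proven in exactly the same manner as Lemma~\ref{lemma:upper_bound},'' and you have faithfully spelled out those details---the rescaling $\bar\rho=r\rho$ with $r(t)=\exp(\int_0^t\zeta)$, the renormalizer $\beta(s)=(\rho_{\min}-s)^+$, the boundary chain via \eqref{eq:exist_m}, \eqref{obs:third} and \eqref{eq:observation}, and the use of $\rho\ge0$ to control the sign of $D\beta(\bar\rho)\bar\rho$.
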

	\begin{proof}
	Keeping in mind the non-negativity of $\rho$, this lemma can be proven in exactly the same manner as Lemma~\ref{lemma:upper_bound}.
	\end{proof}
	\section{Continuous Dependence on the data}
	\label{sec:cont_depend}
	The previous statements, especially the renormalization property of Theorem~\ref{theo:trace}, provide us with the necessary tools to prove a central result of this article: a kind of sequential continuity of the solution operator. We formulate this statement as a theorem with two parts. In the first part, the weak-$\star$ convergence of a sequence of solutions to a solution of the limit problem is obtained for very weak assumptions. This result will play a crucial role in the proof of existence of solutions of the transport equation, whereas the second part, proving the convergence in $C([0,T],L^p(\Omega)^n)$ under more restrictive assumptions, can for example be used for the proof of the existence of solutions of the low Mach number equations on a network (see \cite{RoggensackDiss.2014}).
	
	For the proof of the second part, we need the following result from the theory of Banach spaces concerning the uniform convergence of sequences of Banach valued functions. A proof can be found in \cite{RoggensackRadonRiesz.2014}.
	\begin{theorem}\label{lemma:gleichmaessig}
		Let $V$ be a uniformly convex and uniformly smooth Banach space. Let $(f_n)_n\subset C([0,T],V)$ be a sequence and $f\in C([0,T],V)$ a function. Let the pair $((f_n)_n,f)$ 
		fulfil the following two properties:
		\begin{enumerate}
			\item $\|f_n(t)\|_V$ converges uniformly to $\|f(t)\|_V$ and
			\item $\langle\varphi(t),f_n(t)\rangle$ converges uniformly to $\langle\varphi(t),f(t)\rangle$ for all $\varphi \in C([0,T],V')$.
		\end{enumerate} 
		Then, $f_n$ converges  to $f$ in the norm of $C([0,T],V)$, i.e.\
		\begin{equation*}
		\lim\limits_{n\to \infty}\sup\limits_{t\in[0,T]}\|f_n(t)-f(t)\|_V=0.
		\end{equation*}
	\end{theorem}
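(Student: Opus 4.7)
The plan is to argue by contradiction. Suppose the conclusion fails. Then there exist $\varepsilon>0$, a subsequence (still denoted $(f_n)$), and points $t_n\in[0,T]$ such that $\|f_n(t_n)-f(t_n)\|_V\geq\varepsilon$. By compactness of $[0,T]$, after extracting once more I may assume $t_n\to t^*\in[0,T]$. The strategy is to show that $f_n(t_n)\to f(t^*)$ in norm, which together with $f(t_n)\to f(t^*)$ (from $f\in C([0,T],V)$) contradicts $\|f_n(t_n)-f(t_n)\|_V\geq\varepsilon$ via the triangle inequality.

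The two ingredients for this are convergence of norms and weak convergence of the vectors at the distinguished point $t^*$. For the norms, hypothesis (1) yields $\|f_n(t_n)\|_V-\|f(t_n)\|_V\to 0$, and $\|f(t_n)\|_V\to\|f(t^*)\|_V$ by continuity of $\|\cdot\|_V\circ f$; thus $\|f_n(t_n)\|_V\to\|f(t^*)\|_V$. For weak convergence, fix any $\psi\in V'$ and apply hypothesis (2) to the constant test path $\varphi\equiv\psi\in C([0,T],V')$: this yields $\langle\psi,f_n(t)\rangle\to\langle\psi,f(t)\rangle$ uniformly on $[0,T]$, hence $\langle\psi,f_n(t_n)\rangle-\langle\psi,f(t_n)\rangle\to 0$; combining with $\langle\psi,f(t_n)\rangle\to\langle\psi,f(t^*)\rangle$ (continuity of $\psi\circ f$) gives $f_n(t_n)\rightharpoonup f(t^*)$ in $V$.

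The substantive step is then the Kadets--Klee (Radon--Riesz) property in uniformly convex spaces: weak convergence together with convergence of norms implies norm convergence. If $f(t^*)=0$ the conclusion is immediate from $\|f_n(t_n)\|_V\to 0$. Otherwise, set $y_n:=f_n(t_n)/\|f_n(t_n)\|_V$ and $y:=f(t^*)/\|f(t^*)\|_V$, which are unit vectors with $y_n\rightharpoonup y$; choosing $\psi\in V'$ with $\|\psi\|_{V'}=1$ and $\langle\psi,y\rangle=1$ (Hahn--Banach) gives $\langle\psi,(y_n+y)/2\rangle\to 1$, so $\|(y_n+y)/2\|_V\to 1$, and the modulus of convexity of $V$ forbids $\|y_n-y\|_V$ from staying bounded away from zero. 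Hence $y_n\to y$ and therefore $f_n(t_n)\to f(t^*)$ in norm, closing the contradiction.

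The main obstacle is precisely the last quantitative step: converting the combination of weak and norm convergence into strong convergence, uniformly along the extracted sequence. The compactness of $[0,T]$ lets me reduce the uniform question to a single limit point, where uniform convexity of $V$ applies. Uniform smoothness of $V$ (equivalently, uniform convexity of $V'$) is not strictly needed in the sketch above; I expect \cite{RoggensackRadonRiesz.2014} uses it to handle the dual situation symmetrically or to exploit Fréchet differentiability of the norm, but the present Radon--Riesz argument relies only on uniform convexity of $V$ together with the elementary compactness extraction.
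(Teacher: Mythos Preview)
Your argument is correct: the contradiction setup with compactness of $[0,T]$ reduces the uniform statement to a single diagonal sequence $f_n(t_n)$, and the combination of hypothesis~(1) with continuity of $f$ gives norm convergence while hypothesis~(2) with constant test paths gives weak convergence; the Kadets--Klee property of uniformly convex spaces then closes the contradiction. Your observation that uniform smoothness of $V$ is not actually used in this route is also correct.

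As for comparison with the paper: the paper does not give a proof here but simply cites \cite{RoggensackRadonRiesz.2014}, so there is no in-paper argument to compare against. Your self-contained compactness plus Radon--Riesz argument is the standard one and is entirely adequate for the application in Theorem~\ref{theo:sequent_cont}, where $V=L^2(\Omega)^n$ is Hilbert and both hypotheses are verified via equicontinuity arguments.
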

	\begin{proof}
	See \cite{RoggensackRadonRiesz.2014}.
	\end{proof}
	
\begin{theorem}[Stability]
	\label{theo:sequent_cont}
	1.\ 
	For all $k\in \mathbb N$ let $u_k$, $c_k$, $f_k$, $\rho_{0,k}$, $\rho_{\inn,k}$ and $\mathcal G_k$ be defined as in Section~\ref{sec:transport_assump}. Assume there exists for each $k$ a solution
	\begin{equation*}
	\rho_k\in L^{\infty}((0,T)\times\Omega)^n
	\end{equation*}
	of the initial-boundary-value problem
	\begin{align*}
	(\rho_k)_t+(\mathbf U_k\rho_k)_x+\mathbf C_k\rho_k&=f_k&&\text{in }(0,T)\times\Omega\\
	\rho_k(0)&=\rho_{0,k}&&\text{in }\Omega\\
	(\nu\mathbf U_k)^-\rho_k&=(\nu\mathbf U_k)^-\mathcal H_k(\rho_k|_{\Gamma_{T}})&&\text{on }\Gamma_T
	\end{align*}
	with trace $\gamma\rho_k\in L^{\infty}(\Gamma_{T},|\mathrm d\mu_{u_k}|)$.
	
	Moreover, we assume:
	\begin{description}
		\item[{uniform boundedness}]
		The sequence $(\rho_{\max,_k})_k\subset \mathbb R^n_{\geq 0}$ from assumption \eqref{eq:exist_M} is bounded, i.e.\  there is $\rho_{\max}\in \mathbb R^n_{\geq 0}$ with $\rho_{\max,k}\leq \rho_{\max}$.
		\item[{convergence of reaction terms}]
		The sequence $(c_k)_k$ strongly converges  to $c\in L^1((0,T)\times\Omega)^n$ in $L^1((0,T)\times\Omega)^n$.
		\item[{convergence of the velocities}] 
		$(u_k)_k$ strongly converges  in $L^1((0,T)\times\Omega)^n$ to $u\in L^1((0,T),W^{1,1}(\Omega)^n)$ with $(u_x+c)^-\in L^1((0,T),L^{\infty}(\Omega)^n)$ and $(u_x)^+\in L^1((0,T),L^{\infty}(\Omega)^n)$.
		\item[{convergence of velocities at the boundary}] 
		$(\nu u_k)_k$ strongly converges  to $\nu u$ in $L^{1}(\Gamma_T)^n$.
		\item[{boundedness\;\! of\;\! reaction\;\! terms\;\! and\;\! velocity\;\! derivatives}]
		The sequence\, $((c_k+(u_k)_x)^-)_k$ is bounded in $L^1((0,T),L^{\infty}(\Omega))$.
		\item[{weak convergence and boundedness of source terms}]
		The sequence $(f_k)_k$ weakly con\-ver\-ges  to $f\in L^1((0,T),L^{\infty}(\Omega)^n)$ in $L^1((0,T)\times\Omega)^n$ and it is bounded in $L^1((0,T),L^{\infty}(\Omega)^n)$.
		\item[{weak-$\star$ convergence of the initial conditions}] The sequence
		$(\rho_{0,k})_k$  weakly\nobreakdash-$\star$ converges to $\rho_{0}\in L^{\infty}(\Omega)^n$ in $L^{\infty}(\Omega)^n$.
		\item[{weak-$\star$ convergence of the inflow densities}]
		The sequence $(\rho_{\inn,k})_k$  weakly\nobreakdash-$\star$ converges to $\rho_{\inn}\in L^{\infty}(\Gamma_T,\mathrm d\mu_u^-)$ in $L^{\infty}(\Gamma_T,\mathrm d\mu_u^-)$.
		\item[{weak-$\star$ convergence/continuity of the boundary operators}]
		There exists an operator $\mathcal G$ fulfilling the assumptions of Section~\ref{sec:transport_assump} such that for each weak-\nobreakdash$\star$ convergent sequence $q_k\in L^{\infty}(\Gamma_T)^n$ with limit $q$ it holds
		\begin{equation*}
		(\nu \mathbf U_k)^-\mathcal G_k(q_k)(\nu u_k)^-\rightharpoonup(\nu \mathbf U)^-\mathcal G(q)
		\end{equation*}
		in $L^{1}(\Gamma_T)^n$.
	\end{description}
	Then, there exists a solution $\rho\in C([0,T],L^p(\Omega)^n)$ with trace $\gamma\rho\in L^{\infty}(\Gamma_T,|\mathrm d\mu_u|)$ of the transport equation
	\begin{align}
	\notag
	\rho_t+(\mathbf U)\rho_x+\mathbf C\rho&=f&&\text{in }(0,T)\times\Omega\\
	\label{eq:limit_prob}
	\rho(0)&=\rho_0&&\text{in }\Omega\\
	\notag
	(\nu\mathbf U)^-\rho &=(\nu\mathbf U)^-\mathcal H(\rho|_{\Gamma_T})&&\text{on }\Gamma_T.
	\end{align}
		Furthermore, it holds
		\begin{align*}
		\rho_k&\xrightharpoonup{\star}\rho&&\text{in }L^{\infty}((0,T)\times\Omega)^n,\\
		\gamma\rho_k&\xrightharpoonup{\star}\gamma\rho&&\text{in }L^{\infty}(\Gamma_T,|\mathrm d\mu_u|)
		\shortintertext{and}
		\rho_k(t)&\rightharpoonup\rho(t)&&\text{in }L^p(\Omega)^n\\
		\end{align*}
		 for all $t\in [0,T]$ and $p\in [1,\infty)$.
		 
		 2.\ We assume additionally:
		 \begin{description}
		 	\item[{convergence of the velocity derivative}]
		 	The sequence $\left((u_k)_x\right)_k$ strongly converges  to $u_x$ in the $L^1((0,T)\times\Omega)^n$-norm.
		 	\item[{convergence of reaction terms and velocity derivatives}]
			 $(\bar \alpha_k)_k$ with $\bar \alpha_k=\left\|(2c_k+(u_k)_x)^-\right\|_{L^{\infty}(\Omega)^n}$ strongly converges  to $\bar \alpha=\left\|(2c+u_x)^-\right\|_{L^{\infty}(\Omega)^n}$ in the  $L^1(0,T)$-norm.
		 	\item[\textbf{convergence of source terms}] The sequence
		 	$(f_k)_k$ strongly converges to $f$ in $L^1((0,T)\times\Omega)^n$.
			\item[{convergence of initial conditions}]
		 	The sequence $(\rho_{0,k})_k$ strongly converges to $\rho_{0}$ in the $L^1(\Omega)^n$-norm.
		 	\item[{weak lower semi-continuity of the boundary operators}]
		 	For each weak\nobreakdash-$\star$ convergent sequence $\rho_k\xrightharpoonup{\star}\rho$ in $L^{\infty}(\Gamma_T)^n$ it holds
		 	\begin{equation}
		 	\label{eq:lower_semi_cont}
		 	\begin{split}
		 	&\int_0^T\int_{\Gamma}\beta(\rho)^T\overline{\mathbf W}(\nu u)^+-\beta(\bar{\mathcal H}(\rho))^T\overline{\mathbf W}(\nu u)^-\mathrm d\omega\mathrm dt\\
		 	&\quad\leq\liminf\limits_k\int_0^T\int_{\Gamma}\beta(\rho_k)^T\overline{\mathbf W}(\nu u_k)^+-\beta(\bar{\mathcal H}_k(\rho_k))^T\overline{\mathbf W}(\nu u_k)^-\mathrm d\omega\mathrm dt
		 	\end{split}
		 	\end{equation}
		 	with $\beta\colon\mathbb R^n\to \mathbb R^n$ defined by $\beta_j(s)=s_j^{2}$, $\bar{\mathcal H}_k(\rho)=r_k\rho_{\inn,k}+\mathcal G_k(\rho)$ and $\bar{\mathcal H}(\rho)=r\rho_{\inn}+\mathcal G(\rho)$. Here, $r_k$ and $r$ are given by $r_k(t)=\exp\left(-\frac 12\int_0^t\bar\alpha_k(s)\mathrm ds\right)$ and $r(t)=\exp\left(-\frac 12\int_0^t\bar \alpha(s)\mathrm ds\right)$ and $\overline{\mathbf W}$ is the weight matrix introduced in Section~\ref{sec:transport_assump}.
		 \end{description}
		 Then, the convergence is even stronger:
		 \begin{align*}
		 \rho_k&\to\rho&&\text{in }C([0,T],L^p(\Omega)^n)\\
		 \intertext{and}
		 \gamma\rho_k&\to\gamma\rho&&\text{in }L^p(\Gamma_T,|\mathrm d\mu_u|)
		 \end{align*}
		 for all $p\in[1,\infty)$.
\end{theorem}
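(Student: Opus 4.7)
The plan for part~1 is to apply Lemma~\ref{lemma:upper_bound} to each $\rho_k$. The uniform upper bound $\rho_{\max,k}\leq\rho_{\max}$ together with the boundedness of $((c_k+(u_k)_x)^-)_k$ in $L^1((0,T),L^\infty(\Omega))$ and of $(f_k)_k$ in $L^1((0,T),L^\infty(\Omega)^n)$ yields a uniform $L^\infty$ bound $0\leq\rho_k\leq M$ on $(0,T)\times\Omega$ and $0\leq\gamma\rho_k\leq M$ $|\mathrm d\mu_{u_k}|$-almost everywhere. Extending the traces by zero to $\Gamma_T$, I extract weak-$\star$ convergent subsequences $\rho_k\xrightharpoonup{\star}\rho$ in $L^\infty((0,T)\times\Omega)^n$ and $\gamma\rho_k\xrightharpoonup{\star}\sigma$ in $L^\infty(\Gamma_T)^n$ and pass to the limit in the trace identity \eqref{eq:trace_exist}: the interior terms converge via strong-weak-$\star$ pairings (using the strong $L^1$-convergence of $u_k,c_k$ and the weak $L^1$-convergence of $f_k$), the initial datum converges because $\rho_{0,k}\xrightharpoonup{\star}\rho_0$, and the boundary term converges because $\nu u_k\to\nu u$ strongly in $L^1(\Gamma_T)^n$ while $\gamma\rho_k\xrightharpoonup{\star}\sigma$ in $L^\infty(\Gamma_T)^n$. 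Uniqueness of the trace in Theorem~\ref{theo:trace} forces $\sigma=\gamma\rho$. To pass to the limit in $(\nu u_k)^-\gamma\rho_k=(\nu u_k)^-\rho_{\inn,k}+(\nu u_k)^-\mathcal G_k(\gamma\rho_k)$ I again use strong times weak-$\star$ for the first two summands and the assumed convergence $(\nu\mathbf U_k)^-\mathcal G_k(\gamma\rho_k)\rightharpoonup(\nu\mathbf U)^-\mathcal G(\gamma\rho)$ in $L^1(\Gamma_T)^n$ for the third. Applying \eqref{eq:trace_exist} on $[0,t]$ with time-independent test functions finally yields the pointwise weak convergence $\rho_k(t)\rightharpoonup\rho(t)$ in $L^p(\Omega)^n$ for each $t\in[0,T]$; by uniqueness of the limit the whole sequence converges.

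For part~2, the plan is to derive an energy identity by applying the renormalization property in Theorem~\ref{theo:trace} to $\rho_k$ with $\beta_j(s)=s_j^2$, tested against $\varphi=r_k^2\overline{\mathbf W}\Eins$, where $r_k(t)=\exp(-\tfrac12\int_0^t\bar\alpha_k(s)\,\mathrm ds)$. The choice $\bar\alpha_k=\|(2c_k+(u_k)_x)^-\|_{L^\infty(\Omega)^n}$ is tailored so that the contribution $\bar\alpha_k\beta(\rho_k)+(2\mathbf C_k+(\mathbf U_k)_x)\beta(\rho_k)$ is componentwise non-negative, splitting the bulk terms into a non-negative remainder plus linear contributions in $f_k$. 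This yields an identity of the schematic form
\begin{equation*}
\int_\Omega r_k(t)^2\beta(\rho_k(t))\transpose\overline{\mathbf W}\Eins\,\mathrm dx+B_k(t)=\int_\Omega\beta(\rho_{0,k})\transpose\overline{\mathbf W}\Eins\,\mathrm dx+R_k(t),
\end{equation*}
where $B_k(t)$ is precisely the boundary expression appearing on the left-hand side of \eqref{eq:lower_semi_cont} restricted to $[0,t]$, and $R_k(t)$ depends affinely on $f_k,c_k,(u_k)_x$ and $r_k^2\rho_k$. Under the strengthened strong $L^1$-hypotheses of part~2, $R_k(t)\to R(t)$ uniformly in $t$, and the analogous identity is obtained for the limit problem \eqref{eq:limit_prob}.

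Passing to the $\liminf$ in the energy identity, weak lower semi-continuity of the $L^2_{\overline{\mathbf W}}$-norm of the first term and the assumption \eqref{eq:lower_semi_cont} for $B_k(t)$ together produce two matching inequalities that force $\|r_k\rho_k(t)\|_{L^2_{\overline{\mathbf W}}}^2\to\|r\rho(t)\|_{L^2_{\overline{\mathbf W}}}^2$ for every $t\in[0,T]$. Equicontinuity in $t$ of these norms, uniformly in $k$, follows from the absolute continuity of $R_k$ and $B_k$ with $k$-uniform integrable majorants, so the convergence is in fact uniform in $t$. Combined with the pointwise weak convergence $\rho_k(t)\rightharpoonup\rho(t)$ in $L^2(\Omega)^n$ from part~1, Theorem~\ref{lemma:gleichmaessig} applied to the uniformly convex and uniformly smooth space $V=L^2(\Omega)^n$ delivers $\rho_k\to\rho$ in $C([0,T],L^2(\Omega)^n)$; an interpolation with the uniform $L^\infty$ bound extends this to every $p\in[1,\infty)$, and the same bookkeeping in the boundary part of the energy identity yields the strong trace convergence $\gamma\rho_k\to\gamma\rho$ in $L^p(\Gamma_T,|\mathrm d\mu_u|)$. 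The principal obstacle is the careful engineering of the renormalization so that the sign conventions in \eqref{eq:lower_semi_cont} match the sign of the boundary contribution in the energy identity; the exponent $\tfrac12$ in $r_k$ and the choice $\beta_j(s)=s_j^2$ are precisely what put everything into the weighted $L^2$-setting for which Theorem~\ref{lemma:gleichmaessig} is applicable.
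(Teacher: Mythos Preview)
Your Part~1 sketch is essentially the paper's argument: uniform bounds from Lemma~\ref{lemma:upper_bound}, weak-$\star$ subsequences, pass to the limit in \eqref{eq:trace_exist} using strong/weak-$\star$ pairings, identify the trace, verify the coupled boundary condition, and then get $\rho_k(t)\rightharpoonup\rho(t)$ from \eqref{eq:trace_exist} on $[0,t]$. Nothing to add there.

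In Part~2 your overall architecture matches the paper's, but there is a genuine gap in the step ``$R_k(t)\to R(t)$ uniformly''. After applying the renormalization with $\beta_j(s)=s_j^2$ and the weight $r_k^2$, the bulk contribution contains the term
\[
Q_k(t)=\int_0^t\!\!\int_{\Omega}r_k^2\bigl(\bar\alpha_k\Eins+2c_k+(u_k)_x\bigr)\transpose\overline{\mathbf W}\,\beta(\rho_k)\,\mathrm dx\,\mathrm ds,
\]
which is \emph{quadratic} in $\rho_k$, not affine. Strong $L^1$-convergence of $c_k,(u_k)_x,\bar\alpha_k$ together with weak-$\star$ convergence of $\rho_k$ does \emph{not} give $Q_k(t)\to Q(t)$, because weak-$\star$ convergence of $\rho_k$ does not imply that $\beta(\rho_k)=(\rho_k^j)^2$ converges weak-$\star$ to $\beta(\rho)$. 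So you cannot simply absorb $Q_k$ into a convergent remainder $R_k$. The paper closes this gap by treating $Q_k$ as a \emph{third} term with weak lower semi-continuity: since $\bar\alpha_k+2c_k+(u_k)_x\geq 0$, one writes
\[
Q_k(t)=\sum_j\bigl\|(\bar\alpha_k+2c_k^j+(u_k^j)_x)^{1/2}\,r_k\rho_k^j\bigr\|_{L^2((0,t)\times\Omega)}^2,
\]
observes that $(\bar\alpha_k+2c_k^j+(u_k^j)_x)^{1/2}\to(\bar\alpha+2c^j+u_x^j)^{1/2}$ strongly in $L^2$ so that $(\bar\alpha_k+2c_k^j+(u_k^j)_x)^{1/2}r_k\rho_k^j\rightharpoonup(\bar\alpha+2c^j+u_x^j)^{1/2}r\rho^j$ in $L^2$, and then invokes weak lower semi-continuity of the $L^2$-norm. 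The $\liminf/\limsup$ sandwich is then run with \emph{three} non-negative terms ($E_k$, $B_k$, $Q_k$), each weakly lower semi-continuous, whose sum converges because $I_k+F_k\to I+F$; this forces all three to converge, in particular $E_k(t)\to E(t)$.

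Two smaller points. First, the test function $\varphi=r_k^2\overline{\mathbf W}\Eins$ is only absolutely continuous, not $C^{0,1}$, so you cannot plug it directly into the renormalization identity; an approximation as in the proof of Lemma~\ref{lemma:upper_bound} is needed (the paper sidesteps this by first passing to $\bar\rho_k=r_k\rho_k$ and using the constant test function $\overline{\mathbf W}\Eins$). Second, Theorem~\ref{lemma:gleichmaessig} requires not only uniform convergence of $t\mapsto\|\rho_k(t)\|$ but also uniform (in $t$) convergence of the dual pairings $t\mapsto\langle\varphi(t),\rho_k(t)\rangle$; you only invoke pointwise weak convergence from Part~1. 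The paper supplies a separate equicontinuity argument for these pairings (via \eqref{eq:trace_exist} and equi-integrability of $u_k,c_k,f_k,\nu u_k$), which you should include.
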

	\begin{proof}
			As before, we extend $\gamma\rho_k$, $\rho_{\inn,k}$, $\mathcal G_k(\gamma\rho_k)$ and $\mathcal H(\gamma\rho_k)$ by zero to functions in $L^{\infty}(\Gamma_T)^n$.
			
			1.\ First, we want to prove the existence of a solution of the limit problem and the weak-$\star$ convergence. Because of Lemma~\ref{lemma:upper_bound} and the boundedness of $\rho_{\max_k}\leq \rho_{\max}$, $\|(c_k+(u_k)_x)^-\|_{L^1((0,T),L^{\infty}(\Omega)^n)}\leq C_1$ and $\|f_k\|_{L^1((0,T),L^{\infty}(\Omega)^n)}\leq C_2$ the sequences $\rho_k$ and $\gamma\rho_k$ are bounded almost everywhere by
			\begin{align}
			&\max(\|\rho_k\|_{L^{\infty}((0,T)\times\Omega)^n},\|\gamma\rho_k\|_{L^{\infty}(\Gamma_T)^n})\notag\\
			&\leq \max\left((\rho_{\max,k}+F_k(t),F_k(T)+F_k(t)\right)\exp\left(\|(c_k+(u_k)_x)^-\|_{L^1((0,T),L^{\infty}(\Omega)^n)}\right)\notag\\
			\label{eq:upper_bound_uniform}
			&\leq \left(\max(\rho_{\max},C_2)+C_2\right)\exp(C_1)\\
			&=\bar\rho_{\max}.\notag
			\end{align}
			Here, it is \begin{equation*}
			F_{k}(t)=\int_0^t\exp\left(-\int_0^s\|(c_k+(u_k)_x)^-\|\mathrm dr\right)\begin{pmatrix}
			\|((f_k)_1(s,\cdot))^+\|_{L^{\infty}(\Omega)}\\
			\vdots\\
			\|((f_k)_n(s,\cdot))^+\|_{L^{\infty}(\Omega)}.
			\end{pmatrix}\mathrm ds
			\end{equation*}Thus, we can extract weak-$\star$ convergent subsequences - also denoted by $\rho_k$ and $\gamma\rho_k$ - with
			\begin{align*}
			\rho_k&\xrightharpoonup{\star}\rho&&\text{in }L^{\infty}((0,T)\times\Omega)^n\\
			\shortintertext{and}
			\gamma\rho_k&\xrightharpoonup{\star}q&&\text{in }L^{\infty}(\Gamma_T)^n.
			\end{align*}
			As a second step, we will show that $\rho$ is a solution of the transport equation and that $q$ is its trace and fulfils the boundary conditions.
			
			Because of the strong convergence of $u_k$ and $c_k$ in $L^1((0,T)\times\Omega)^n$, the sequences $\mathbf U_k\rho_k$ and $\mathbf C_k\rho_k$ converge weakly in $L^1((0,T)\times\Omega)^n$, i.e.\ 
			\begin{align*}
			\mathbf U_k\rho_k&\rightharpoonup\mathbf U\rho\\
			\shortintertext{and}
			\mathbf C_k\rho_k&\rightharpoonup\mathbf C\rho.
			\end{align*}
			Similarly, $(\nu\mathbf U_k)\gamma\rho_k$ converges weakly to $(\nu\mathbf U)\rho$ in $L^1(\Gamma_T)^n$.
			Thus, for a test function $\varphi\in C^{0,1}([0,T]\times\overline\Omega)$ with $\varphi(T)=0$ it holds
			\begin{align*}
			0&=\int_0^T\int_{\Omega}\rho_k\transpose\left(\varphi_t+\mathbf U_k\varphi_x-\mathbf C_k\varphi\right)+f_k\transpose\varphi\mathrm dx\mathrm dt\\
			&\quad+\int_{\Omega}\rho_{0,k}\transpose\varphi(0)\mathrm dx-\int_0^T\int_{\Gamma}\gamma\rho_k\transpose(\nu\mathbf U_k)\varphi\mathrm d\omega\mathrm dt\\
			&\to \int_0^T\int_{\Omega}\rho\transpose\left(\varphi_t+\mathbf U\varphi_x-\mathbf C\varphi\right)+f\transpose\varphi\mathrm dx\mathrm dt\\
			&\quad+\int_{\Omega}\rho_0\transpose\varphi(0)\mathrm dx-\int_0^T\int_{\Gamma}q\transpose(\nu\mathbf U)\varphi\mathrm d\omega\mathrm dt.
			\end{align*}
			In words, this means that $\rho$ is a solution of the transport equation with its unique trace $\gamma \rho=q$. Furthermore, because of Theorem~\ref{theo:trace} it is $\rho\in C([0,T],L^p(\Omega)^n)$ and $\rho$ fulfils the initial condition $\rho(0)=\rho_0$. For the boundary term we have to consider the trace $q$ in more detail. The assumptions on $\mathcal G_k$ and $\rho_{\inn,k}$ yield the weak convergence of $(\nu\mathbf U_k)^-\rho_{\inn,k}$ and $(\nu\mathbf U_k)^-\mathcal G_k(\gamma\rho_k)$. Thus,
			\begin{align*}
			\int_0^T\int_{\Gamma}q\transpose(\nu\mathbf U)^-\varphi\mathrm d\omega\mathrm dt&\gets\int_{0}^{T}\int_{\Gamma}\gamma\rho_k\transpose(\nu\mathbf U_k)^-\varphi\mathrm d\omega\mathrm dt\\
			&=\int_0^T\int_{\Gamma}\mathcal H(\gamma\rho_k)\transpose(\nu\mathbf U_k)^-\varphi\mathrm d\omega\mathrm dt\\
			&=\int_0^T\int_{\Gamma}\left(\rho_{\inn,k}+\mathcal G_k(\gamma\rho_k)\right)\transpose(\nu\mathbf U_k)^-\varphi\mathrm d\omega\mathrm dt\\
			&\to\int_0^T\int_{\Gamma}\left(\rho_{\inn}+\mathcal G(q)\right)\transpose(\nu\mathbf U)^-\varphi\mathrm d\omega\mathrm dt\\
			&=\int_0^T\int_{\Gamma}\mathcal H(q)\transpose(\nu\mathbf U)^-\varphi\mathrm d\omega\mathrm dt
			\end{align*}
			is true and the trace $q=\gamma \rho$ fulfils the boundary condition
			\begin{align*}
			(\nu\mathbf U)^-\gamma q=(\nu\mathbf U)^-\mathcal H(\gamma q).
			\end{align*}
			Hence, $\rho$ is the unique solution of the limit problem \eqref{eq:limit_prob}. Since the weak-$\star$ limits $\rho$ and $q$ are unique, in fact the whole sequences and not only subsequences converge.
			
			As last step of the first statement, we have to show the weak convergence of $\rho_k(t)$ for each $t\in [0,T]$. However, this is rather simple, as  the sequence $\rho_k(t)$ is bounded in $L^p(\Omega)^n$ because of the continuity of $\rho_k$ with values in $L^p(\Omega)^n$ and Lemma~\ref{lemma:upper_bound}. Thus, there exists a weak convergent subsequence in $L^{p}(\Omega)^n$ with
			\begin{equation*}
			\rho_k(t)\rightharpoonup q.
			\end{equation*} For this subsequence and for all time-independent test functions $\varphi\in C^{0,1}(\overline \Omega,\mathbb R^n)$ it holds with Theorem~\ref{theo:trace}
			\begin{align*}
			\int_{\Omega}q\transpose\varphi\mathrm dx&\gets\int_{\Omega}\rho_k(t)\transpose\varphi\mathrm dx\\&=\int_0^t\int_{\Omega}\rho_k\transpose\left(\mathbf U_k\varphi_x-\mathbf C_k\varphi\right)+f_k\transpose\varphi\mathrm dx\mathrm dt\\
			&\quad+\int_{\Omega}\rho_{0,k}\transpose\varphi\mathrm dx-\int_0^t\int_{\Gamma}\gamma\rho_k\transpose(\nu\mathbf U_k)\varphi\mathrm d\omega\mathrm dt\\
			&\to \int_0^t\int_{\Omega}\rho\transpose\left(\mathbf U\varphi_x-\mathbf C\varphi\right)+f\transpose\varphi\mathrm dx\mathrm dt\\
			&\quad+\int_{\Omega}\rho_0\transpose\varphi\mathrm dx-\int_0^t\int_{\Gamma}\gamma\rho\transpose(\nu\mathbf U)\varphi\mathrm d\omega\mathrm dt\\
			&=\int_{\Omega}\rho(t)\transpose\varphi\mathrm dx.
			\end{align*}
			Thus, we conclude
			\begin{equation*}
			q=\rho(t)
			\end{equation*}
			due to the density of $C^{0,1}(\overline{\Omega},\mathbb R^n)$ in $L^{p'}(\Omega)^n$.
			Again, by the uniqueness of the solution $\rho$ we see that the whole sequence converges weakly.
			
			2.\ Now, we consider the stronger assumptions in order to prove both, the uniform convergence of $\rho_k$ with values in $L^p(\Omega)^n$ and the strong convergence of the trace. As a first step, we prove that the convergence of $\rho_k(t)$ is actually strong in $L^p(\Omega)^n$ for each $t\in [0,T]$. To this end, we use the Radon-Riesz property (see e.g.\ \cite{megginson1998}) of the $L^p$-spaces with $p\in(1,\infty)$, i.e.\  the fact
			\begin{equation}
			\label{eq:Radon-Riesz}
			\left.\begin{array}{l}
				f_k\rightharpoonup f\text{ in } L^{p}(\Omega)^n\\
				\|f_k\|_{L^p(\Omega)^n}\rightarrow \|f\|_{L^p(\Omega)^n}
			\end{array}\right\}
			\Rightarrow f_k\rightarrow f \text{ in } L^p(\Omega)^n.
			\end{equation}
		
			We define the auxiliary variable
			\begin{equation*}
			\bar \rho_k(t,x)=\rho_k(t,x)r_k(t)
			\end{equation*}
			with $r_k(t)=\exp\left(-\frac12\int_0^t\bar\alpha_k(s)\mathrm ds\right)$ and $\bar\alpha_k(s)=\left\|\left(2c_k+(u_k)_x\right)^-\right\|_{L^{\infty}(\Omega)^n}.$ By assumption, it holds $\bar{\alpha}_k\to \bar{\alpha}=\left\|\left(2c+u_x\right)^-\right\|_{L^{\infty}(\Omega)^n}$ in $L^1((0,T))$. Thus, we conclude
			\begin{equation*}
			r_k(t)\to r(t)=\exp\left(-\frac12\int_0^t\alpha(s)\mathrm ds\right)
			\end{equation*}in $C([0,T])$, as in the proof of Lemma~\ref{lemma:upper_bound}. We also see, as in that proof, that $\bar \rho_k$ solves the equation
			\begin{align*}
			(\bar\rho_k)_t+\left(\mathbf U_k\bar\rho_k\right)_x+\bar{\mathbf C}_k\bar\rho_k&=\bar f_k&&\text{in }(0,T)\times\Omega\\
			\bar\rho_k(0)&=\rho_{0,k}&&\text{in }\Omega\\
			(\nu\mathbf U_k)^-\bar\rho_k&=(\nu\mathbf U_k)^-\bar{\mathcal H}_k(\bar\rho_k|_{\Gamma_T})&&\text{on }\Gamma_T
			\end{align*}
			with $\bar c_k=c_k+\frac12\bar{\alpha}_k\Eins$, $\bar f_k=r_kf_k$ and $\bar{\mathcal H}_k(\bar \rho)=r_k(t)\rho_{\inn,k}+\mathcal G_k(\bar{\rho})$. For $\bar \rho_k$ and this problem, all requirements of the first part of this theorem are fulfilled. Thus, we conclude
			\begin{align*}
			\bar \rho_k(t)\rightharpoonup \bar\rho(t)
			\end{align*}
			in $L^p(\Omega)^n$, where $\bar \rho$ is the solution of
						\begin{align*}
						(\bar\rho)_t+\left(\mathbf U\bar\rho\right)_x+\bar{\mathbf C}\bar\rho&=\bar f&&\text{in }(0,T)\times\Omega\\
						\bar\rho(0)&=\rho_{0}&&\text{in }\Omega\\
						(\nu\mathbf U)^-\bar\rho_k&=(\nu\mathbf U)^-\bar{\mathcal H}(\bar\rho|_{\Gamma_T})&&\text{on }\Gamma_T
						\end{align*}
			with $\bar c=c+\frac12 \bar{\alpha}\Eins$, $\bar f=rf$ and $\bar{\mathcal H}(\bar \rho)=r(t)\rho_{\inn}+\mathcal G(\bar{\rho})$.
			
			The renormalization property for $\beta\colon \mathbb R^n\to\mathbb R^n$ defined by $\beta(s)_j=s_j^2$ and $\varphi=\overline{\mathbf W}\Eins$, the weak lower semi-continuity of the $L^2$-Norm, the strong convergence of $\rho_{0,k}$ and the weak convergence of $\bar f_k\transpose\overline{\mathbf W}\bar \rho_k$ yield
				{\allowdisplaybreaks\begin{align}
				&\int_{\Omega}\Eins\transpose\overline{\mathbf W}\beta(\rho_0)\mathrm dx-\int_0^t\int_{\Gamma}\beta(\gamma{\bar\rho})\transpose\overline{\mathbf W}(\nu u)\mathrm d\omega\mathrm dt\notag\\*
				&\quad-\int_0^t\int_{\Omega}(2\bar c+u_x)\transpose\overline{\mathbf W}\beta(\bar\rho)\mathrm dx\mathrm dt-2\bar f\transpose\overline{\mathbf W}\bar\rho\mathrm dx\mathrm dt\notag\\
				&=\int_{\Omega}\Eins\transpose\overline{\mathbf W}\beta(\bar\rho(t))\mathrm dx\notag\\
				&=\|\overline{\mathbf W}\bar\rho(t)\|_{L^{2}(\Omega)^n}^{2}\notag\\
				&\leq 
				\liminf_k\|\overline{\mathbf W}\bar\rho_{k}(t)\|_{L^{2}(\Omega)^n}^{2}\label{eq:liminf}\\
				&=\liminf_k\Biggl(\int_{\Omega}\Eins\transpose\overline{\mathbf W}\beta(\rho_{0,k})\mathrm dx-\int_0^t\int_{\Gamma}\beta({\gamma\bar\rho}_{k})\transpose\overline{\mathbf W}(u_{k}\nu)\mathrm d\omega\mathrm dt\notag\\*
				&\quad\quad-\int_0^t\int_{\Omega}((2\bar c_k+u_{k})_x)\transpose\overline{\mathbf W}\beta(\bar\rho_{k})-2\bar f_k\transpose\overline{\mathbf W}\bar\rho_k\mathrm dx\mathrm dt\Biggr)\notag\\
				&=\int_{\Omega}\Eins\transpose\overline{\mathbf W}\beta(\rho_0)\mathrm dx\notag\\*
				&\quad+\liminf_k\Biggl(\int_0^t\int_{\Gamma}\beta(\bar{\mathcal H}_{k}({\gamma\bar\rho}_{k}))\transpose\overline{\mathbf W}(u_{k}\nu)^--\beta(\overline{\gamma\rho}_{k})\transpose\overline{\mathbf W}(u_{k}\nu)^+\mathrm d\omega\mathrm dt\notag\\*
				&\quad\quad-\int_0^t\int_{\Omega}(2\bar c_k+(u_{k})_x)\transpose\overline{\mathbf W}\beta(\bar\rho_{k})\mathrm dx\mathrm dt\Biggr)+\int_0^t\int_{\Omega}2\bar f\transpose\overline{\mathbf W}\bar\rho\mathrm dx\mathrm dt\notag
				\end{align}}
				and thus
				\begin{equation}
				\label{eq:limsup}
				\begin{split}
				&\limsup_k\Biggl(\int_0^t\int_{\Gamma}\beta({\gamma\bar\rho}_{k})\transpose\overline{\mathbf W}(u_{k}\nu)^+-\beta(\bar{\mathcal H}_{k}({\gamma\bar\rho}_{k}))\transpose\overline{\mathbf W}(u_{k}\nu)^-\mathrm d\omega\mathrm dt\\
				&\quad\quad+\int_0^t\int_{\Omega}(2\bar c_k+(u_{k})_x)\transpose\overline{\mathbf W}\beta(\bar \rho_{k})\mathrm dx\mathrm dt\Biggr)\\
				&\leq \int_0^t\int_{\Gamma}\beta({\gamma\bar\rho})\transpose\overline{\mathbf W}(\nu u)^+-\beta(\bar{\mathcal H}({\gamma\bar\rho}))\transpose\overline{\mathbf W}(\nu u)^-\mathrm d\omega\mathrm dt\\
				&\quad+\int_0^t\int_{\Omega}(2\bar c+u_x)\transpose\overline{\mathbf W}\beta(\bar\rho)\mathrm dx\mathrm dt.
				\end{split}
				\end{equation}
				Using the assumption \eqref{eq:lower_semi_cont} for the weak-$\star$ convergent sequence $\chi_{[0,t]}{\gamma\bar\rho}_{k}$, we conclude 
				\begin{equation}
				\label{eq:inequality_H}
				\begin{split}
				&\int_0^t\int_{\Gamma}\beta({\gamma\bar\rho})^T\overline{\mathbf W}(\nu u)^+-\beta(\bar{\mathcal H}({\gamma\bar\rho}))^T\overline{\mathbf W}(\nu u)^-\mathrm d\omega\mathrm dt\\
				&\quad\leq\liminf\limits_k\int_0^t\int_{\Gamma}\beta({\gamma\bar\rho}_{k})^T\overline{\mathbf W}(\nu u_k)^+-\beta(\bar{\mathcal H}_k({\gamma\bar\rho}_{k}))^T\overline{\mathbf W}(\nu u_k)^-\mathrm d\omega\mathrm dt
				\end{split}
				\end{equation}
				for all $t\in [0,T]$. Because of the strong convergence of $c_k$ and $(u_k)_x$ in $L^1((0,T)\times\Omega)^n$, we have for $j=1,\ldots,n$ also the convergence 
						\begin{equation*}
						\left|2\bar c^j_k+(u^j_k)_x\right|^{\frac 1{2}}\rightarrow\left|2\bar c^j+(u^j)_x\right|^{\frac 1{2}}
						\end{equation*} in $L^{2}([0,T]\times\Omega)$ and hence also the weak convergence
						\begin{equation*}
						\left|2\bar c^j_k+(u^j_k)_x\right|^{\frac 1{2}}\bar \rho_k^j\rightharpoonup\left|2\bar c^j+(u^j)_x\right|^{\frac 1{2}}\bar \rho^j
						\end{equation*}
				in $L^{2}((0,T)\times\Omega)$. Here, the superscripts denote the components of the vectors. Since it holds
				$2\bar c_k+(u_k)_x=2c_k+(u_k)_x+\|(2c_k+(u_k)_x)^-\|_{L^{\infty}(\Omega)^n}\Eins\geq 0,$				
				 we can again use the weak lower semi-continuity of the norm to find
				\begin{align*}
				\int_0^t\int_{\Omega}(2\bar c+u_x)^T\overline{\mathbf W}\beta(\bar\rho)\mathrm dx\mathrm dt&=\sum_{j=1}^n\|(2\bar c^j+(u^j)_x)^{\frac 12}\mathbf  W_{jj}\bar\rho^j\|^2_{L^2((0,t)\times\Omega)}\\
				&\leq\liminf\limits_k\sum_{j=1}^n\|(2\bar c_k^j+(u_k^j)_x)^{\frac 12}\overline{\mathbf W}_{jj}\bar \rho^j_k\|^2_{L^2((0,t)\times\Omega)}\\
				&= \liminf\limits_k\int_0^t\int_{\Omega}\left(2\bar c_k+(u_k)_x\right)^T\overline{\mathbf W}\beta(\bar\rho_k)\mathrm dx\mathrm dt.
				\end{align*}
				This leads with the inequalities \eqref{eq:limsup} and \eqref{eq:inequality_H} to
				{\allowdisplaybreaks\begin{align*}
				&\limsup_k\Biggl(\int_0^t\int_{\Gamma}\beta({\gamma\bar\rho}_{k})\transpose\overline{\mathbf W}(u_{k}\nu)^+-\beta(\bar{\mathcal H}_{k}({\gamma\bar\rho}_{k}))\transpose\overline{\mathbf W}(u_{k}\nu)^-\mathrm d\omega\mathrm dt\\*
				&\quad\quad+\int_0^t\int_{\Omega}(2\bar c_k+(u_{k})_x)\transpose\overline{\mathbf W}\beta(\bar\rho_{k})\mathrm dx\mathrm dt\Biggr)\\
				&\leq \int_0^t\int_{\Gamma}\beta({\gamma\bar\rho})\transpose\overline{\mathbf W}(\nu u)^+-\beta(\bar{\mathcal H}({\gamma\bar\rho}))\transpose\overline{\mathbf W}(\nu u)^-\mathrm d\omega\mathrm dt\\*
				&\quad+\int_0^t\int_{\Omega}(2\bar c+u_x)\transpose\overline{\mathbf W}\beta(\bar\rho)\mathrm dx\mathrm dt\\
				&\leq \liminf_k\Biggl(\int_0^t\int_{\Gamma}\beta({\gamma\bar\rho}_{k})\transpose\overline{\mathbf W}(u_{k}\nu)^+-\beta(\bar{\mathcal H}_{k}({\gamma\bar\rho}_{k}))\transpose\overline{\mathbf W}(u_{k}\nu)^-\mathrm d\omega\Biggr)\mathrm dt\\*
				&\quad+\liminf_k\int_0^t \int_{\Omega}(2\bar c_k+(u_{k})_x)\transpose\overline{\mathbf W}\beta(\bar \rho_{k})\mathrm dx\mathrm dt\\
				&\leq \liminf_k\Biggl(\int_0^t\int_{\Gamma}\beta({\gamma\bar\rho}_{k})\transpose\overline{\mathbf W}(u_{k}\nu)^+-\beta(\bar{\mathcal H}_{k}({\gamma\bar\rho}_{k}))\transpose\overline{\mathbf W}(u_{k}\nu)^-\mathrm d\omega\mathrm dt\\*
				&\quad\quad+\int_0^t\int_{\Omega}(2\bar c_k+(u_{k})_x)\transpose\overline{\mathbf W}\beta(\bar\rho_{k})\mathrm dx\mathrm dt\Biggr).
				\end{align*}}
				Hence, the limit inferior and the limit superior coincide and consequently they are equal to the limit. Altogether, this yields
			
				{\allowdisplaybreaks	\begin{align*}
				&\lim\limits_{k\to\infty}\|\overline{\mathbf W}\bar\rho_k(t)\|_{L^2(\Omega)^n}^2\\
				&=\lim\limits_{k\to\infty}\Biggl(-\int_0^t\int_{\Gamma}\beta({\gamma\bar\rho}_{k})\transpose\overline{\mathbf W}(u_{k}\nu)^+-\beta(\bar{\mathcal H}_{k}({\gamma\bar\rho}_{k}))\transpose\overline{\mathbf W}(u_{k}\nu)^-\mathrm d\omega\mathrm dt\\*
				&\quad\quad+\int_{\Omega}\Eins\transpose\overline{\mathbf W}\beta(\rho_{0,k})\mathrm dx-\int_0^t\int_{\Omega}(2\bar c_k+(u_{k})_x)\transpose\overline{\mathbf W}\beta(\bar\rho_{k})-2\bar f_k\transpose\overline{\mathbf W}\bar\rho_k\mathrm dx\mathrm dt\Biggr)\\
				&=-\int_0^t\int_{\Gamma}\beta({\gamma\bar\rho})\transpose\overline{\mathbf W}(\nu u)^+-\beta(\bar{\mathcal H}({\gamma\bar\rho}))\transpose\overline{\mathbf W}(\nu u)^-\mathrm d\omega\mathrm dt\\*
				&\quad+\int_{\Omega}\Eins\transpose\overline{\mathbf W}\beta(\rho_0)\mathrm dx-\int_0^t\int_{\Omega}(2\bar c+u_x)\transpose\overline{\mathbf W}\beta(\bar\rho)-2\bar f\transpose\overline{\mathbf W}\bar\rho\mathrm dx\mathrm dt\\
				&=\|\overline{\mathbf W}\bar\rho(t)\|_{L^2(\Omega)^n}^2.
				\end{align*}}
				Now, the strong convergence of $\bar \rho_{k}(t)$ for all $t\in [0,T]$ follows by the Radon-Riesz property \eqref{eq:Radon-Riesz} of $L^{2}(\Omega)^n$ and the equivalence of the weighted Euclidean norm $\|\overline{\mathbf W}\cdot\|_2$ and the Euclidean norm $\|\cdot\|_2$. Thus, by the convergence of $\bar r_k$
				\begin{equation*}
				\bar r_k(t)=\exp\left(\frac 12\int_0^t\bar\alpha_k(s)\mathrm ds\right)\to\bar r(t)=\exp\left(\frac 12\int_0^t\bar{\alpha}(s)\mathrm ds\right)
				\end{equation*}
				in $C([0,T])$ (see proof of Lemma~\ref{lemma:upper_bound}), the convergence
				\begin{align*}
				\rho_k(t)=\bar{\rho_k}(t)\bar r_k(t)\to\bar \rho(t)\bar r(t)=\rho(t)
				\end{align*}
				in $L^2(\Omega)^n$ is also true.
				Because of the uniform boundedness of $\rho_k(t)$ in $L^{\infty}(\Omega)^n$, we see that $\rho_k(t)$ converges in all $L^p(\Omega)^n$-spaces for $p\in[1,\infty)$. 

Up to now, we have shown the pointwise in time convergence of $\rho_k$.
		In the next step, we want to show that the convergence of $\rho_k$ is uniform in $[0,T]$ with values in $L^p(\Omega)^n$. This can be done by applying Theorem~\ref{lemma:gleichmaessig} to $\rho_k$, a Radon-Riesz like property for uniform convergence. Therefore, we first prove that $\int_{\Omega}\rho_k(t,x)\transpose\varphi(t,x)\mathrm dx$ converges uniformly towards $\int_{\Omega}\rho(t,x)\transpose\varphi(t,x)\mathrm dx$ for all fixed $\varphi\in C^1([0,T]\times\overline{\Omega},\mathbb R^n)$.
		Recall the upper bound  $\bar \rho_{\max}\in \mathbb R_{>0}$ for $\rho_k$ from the first part of the proof and let be $\varepsilon>0$. We define $\tilde \varepsilon=\frac{\varepsilon}{5\bar\rho_{\max}\|\varphi\|_{C^1([0,T]\times\overline\Omega,\mathbb R^n)}}$. Since $u_k$, $c_k$, $f_k$ and $\nu u_k$ are converging in $L^1$, they are equi-integrable. Thus, there 
		exists $\delta>0$ such that $S\subset [0,T]$ with $\mu(S)<\delta$ implies the four inequalities: $\int_S\|u_k\|_{L^1(\Omega)^n}\mathrm dt<\tilde\varepsilon$, $\int_S\|c_k\|_{L^1(\Omega)^n}\mathrm dt<\tilde{\varepsilon}$, $\int_S\|f_k\|_{L^1(\Omega)^n}\mathrm dt<\tilde{\varepsilon}$ and $\int_S\|\nu u_k\|_{L^1(\Gamma)^n}\mathrm dt<\tilde \varepsilon$. Then, we compute for $t_1,t_2\in[0,T]$ with $|t_2-t_1|<\min(\delta,\tilde{\varepsilon})$
		{\allowdisplaybreaks
		\begin{align*}
		&\left|\int_{\Omega}\varphi(t_2)\transpose\rho_k(t_2)\mathrm dx-\int_{\Omega}\varphi(t_1)\transpose\rho_k(t_1)\mathrm dx\right|\\
		&=\left|\int_{t_1}^{t_2}\int_{\Omega}\left(\varphi_t\transpose+\varphi_x\transpose\mathbf U_k-\varphi\transpose\mathbf C_k\right)\rho_k+f_k\transpose\varphi\mathrm dx\mathrm dt-\int_{t_1}^{t_2}\int_{\Gamma}\varphi\transpose(\nu \mathbf U_k)\gamma\rho_k\mathrm d\omega\mathrm dt\right|\\
		&\leq\|\varphi\|_{C^1}\bar \rho_{\max}\int_{t_1}^{t_2}\left(1+\|u_k\|_{L^1(\Omega)^n}+\|c_k\|_{L^1(\Omega)^n}+\|f_k\|_{L^1(\Omega)^n}\right)\mathrm dt\\*
		&\quad+\|\varphi\|_{\infty}\bar\rho_{\max}\int_{t_1}^{t_2}\|\nu u_k\|_{L^1(\Gamma)^n}\mathrm dt\\
		&\leq \bar\rho_{\max}\|\varphi\|_{C^1}\int_{t_1}^{t_2}\left(1+\|u_k\|_{L^1(\Omega)^n}+\|c_k\|_{L^1(\Omega)^n}+\|f_k\|_{L^1(\Omega)^n}+\|\nu u_k\|_{L^1(\Gamma)^n}\right)\mathrm dt\\
		&<\varepsilon
		\end{align*}}
		which proves the weak equicontinuity of $\rho_k$. Together with the weak convergence of $\rho_k(t)$ in $L^p(\Omega)$, we conclude the uniform in time convergence of $\int_{\Omega}\rho_k(t)\transpose\varphi(t)\mathrm dx$ as each equicontinuous pointwise convergent sequence is uniformly convergent. Due to the density of $C^1([0,T]\times\overline \Omega)^n$ in $C([0,T],L^2(\Omega)^n)$, this uniform convergence holds in fact for all $\varphi\in C([0,T],L^{2}(\Omega)^n)$. 
		
		To apply Theorem~\ref{lemma:gleichmaessig}, we additionally need the uniform convergence of the norm $\|\rho_k(t)\|_{L^{2}(\Omega)^n}$. Therefore, let again be $\varepsilon>0$, define $\tilde \varepsilon=\frac{\varepsilon}{4\max(\bar \rho_{\max},1)\bar\rho_{\max}}$ and choose a value $\delta>0$ such that for all $S\subset [0,T]$ with $\mu(S)<\delta$ the inequalities $\int_S \|2c_k+(u_k)_x\|_{L^1(\Omega)^n}\mathrm dt<{\tilde\varepsilon}$, $\int_S \|f_k\|_{L^1(\Omega)^n}\mathrm dt<{\tilde\varepsilon}$ and $\int_S\|\nu u_k\|_{L^1(\Gamma)^n}\mathrm dt<\tilde\varepsilon$ hold. Then, the renormalization property leads to
		{\allowdisplaybreaks\begin{align*}
		&\left|\|\rho_k(t_2)\|_{L^{2}(\Omega)^n}^{2}-\|\rho_k(t_1)\|_{L^{2}(\Omega)^n}^{2}\right|\\
		&=\left|\int_{t_1}^{t_2}\int_{\Gamma}\beta(\gamma\rho_k)\transpose(\nu u_k)\mathrm d\omega\mathrm dt+\int_{t_1}^{t_2}\int_{\Omega}\left(2c_k+(u_k)_x\right)\transpose\beta(\rho_k)-2f_k\transpose\rho_k\mathrm dx\mathrm dt\right|\\
		&\leq \max(\bar\rho_{\max},1)\bar\rho_{\max}\int_{t_1}^{t_2}\left(\|\nu u_k\|_{L^1(\Gamma)^n}\mathrm+\|2c_k+(u_k)_x\|_{L^1(\Omega)^n}+2\|f_k\|_{L^1(\Omega)^n}\right)\mathrm dt\\
		&<\varepsilon
		\end{align*}
		for $t_1,t_2\in [0,T]$ with $|t_2-t_1|<\delta$.}
		Again, the shown equicontinuity yields together with the pointwise convergence of $\|\rho_k(t)\|_{L^{2}(\Omega)^n}$ the uniform convergence. As a consequence, we can apply Theorem~\ref{lemma:gleichmaessig} to deduce the convergence of $\rho_k$ in  $C([0,T],L^{2}(\Omega)^n)$. As before, the convergence holds in fact in all the spaces $C([0,T],L^p(\Omega)^n)$ for $p\in [1,\infty)$ because of the uniform boundedness.
		
		To finish the proof, it remains to show the strong convergence of the trace $\gamma\rho_k$. Again, 
		we consider the renormalization property for $\beta$ as before with an arbitrary $\varphi\in C^{0,1}([0,T]\times\overline{\Omega},\mathbb R^n)$. Since $\beta(\rho_{k})$ converges strongly to $\beta(\rho)$ in $L^p((0,T)\times\Omega)^n$ and $\beta(\rho_k(T))$ to $\beta(\rho(T))$ in $L^p(\Omega)^n$ we conclude 
		\begin{equation}
		\label{eq:conv_gamma_rho}
		\begin{split}
		&\int_0^T\int_{\Gamma}\beta(\gamma\rho_k)\transpose(\nu\mathbf U_k)\varphi\mathrm d\omega\mathrm dt\\
		&=\int_0^T\int_{\Omega}\beta(\rho_k)\transpose\left(\varphi_t+\mathbf U_k\varphi_x-(2\mathbf C_k
		+(\mathbf U_k)_x)\varphi\right)+2f_k\transpose\rho_k\mathrm dx\mathrm dt\\
		&\quad+\int_{\Omega}\beta(\rho_{0,k})\transpose\varphi(0)\mathrm dx-\int_{\Omega}\beta(\rho_k(T))\transpose\varphi(T)\mathrm dx\\
		&\to\int_0^T\int_{\Omega}\beta(\rho)\transpose\left(\varphi_t+\mathbf U\varphi_x-(2\mathbf C+\mathbf U_x)\varphi\right)+2f\transpose\rho\mathrm dx\mathrm dt\\
		&\quad+\int_{\Omega}\beta(\rho_0)\transpose\varphi(0)\mathrm dx-\int_{\Omega}\beta(\rho(T))\transpose\varphi(T)\mathrm dx\\
		&=\int_0^T\int_{\Gamma}\beta(\gamma\rho)\transpose(\nu\mathbf U)\varphi\mathrm d\omega\mathrm dt.
		\end{split}
		\end{equation}
		
		To use the Radon-Riesz-property again, we would like to choose $\sgn (\nu u)$ as a test function in order to receive a norm of $\gamma\rho$. But since this function is not smooth enough, we have to approximate it to ensure the $C^{0,1}$-regularity. Therefore, let be $\varphi_j\in C^{0,1}([0,T]\times\overline{\Omega},\mathbb R^n)$ such that
		\begin{equation*}
		\varphi_j\to\sgn(\nu u)
		\end{equation*}
		holds almost everywhere in $\Gamma_T$ with $|\varphi(t,x)|\leq 1$. The dominated convergence theorem immediately yields
		\begin{equation*}
		(\nu\mathbf U)\varphi_j\to|\nu u|
		\end{equation*}
		in $L^1(\Gamma_T)^n$. Thus, we find for $\varepsilon >0$ an index $J\in \mathbb N$ with
		\begin{equation*}
		\|(\nu\mathbf U)\varphi_J-|\nu u|\|_{L^1(\Gamma_T)^n}<\frac{\varepsilon}{4\bar\rho_{\max}^2}.
		\end{equation*}
		With the convergence in \eqref{eq:conv_gamma_rho} we find for $\varphi_J$ an index $K_1$ such that
		\begin{equation*}
		\left|\int_0^T\int_{\Gamma}\left(\beta(\gamma\rho_k)\transpose(\nu\mathbf U_k)-\beta(\gamma\rho)\transpose(\nu\mathbf U)\right)\varphi_J\mathrm d\omega\mathrm dt\right|<\frac{\varepsilon}{4}
		\end{equation*}
		for all $k\geq K_1$. Furthermore, because of the convergence of $\nu u_k$ in $L^1(\Gamma_T)^n$ there is also $K_2$ with
		\begin{equation*}
		\|\nu(u_k-u)\|_{L^1(\Gamma_T)^n}<\frac{\varepsilon}{4\bar\rho_{\max}^2}
		\end{equation*}
		for $k\geq K_2$.
		Finally, for all $k\geq \max(K_1,K_2)$ it holds
		\begin{align*}
		&\left|\|\gamma\rho_k\|_{L^2(\Gamma_T,|\mathrm d\mu_u|)}^2-\|\gamma\rho\|_{L^2(\Gamma_T,|\mathrm d\mu_u|)}^2\right|\\
		&\leq\left|\int_0^T\int_{\Gamma}\beta(\gamma\rho_k)\transpose(\nu \mathbf U)\left(\sgn(\nu u)-\varphi_J\right)\mathrm d\omega\mathrm dt\right|\\
		&\quad+\left|\int_0^T\int_{\Gamma}\beta(\gamma\rho_k)\transpose(\nu\left( \mathbf U-\mathbf U_k\right))\varphi_J\mathrm d\omega\mathrm dt\right|\\
		&\quad+\left|\int_0^T\int_{\Gamma}\left(\beta(\gamma\rho_k)\transpose(\nu \mathbf U_k)-\beta(\gamma\rho)\transpose(\nu\mathbf U)\right)\varphi_J\mathrm d\omega\mathrm dt\right|\\
		&\quad+\left|\int_0^T\int_{\Gamma}\beta(\gamma\rho)\transpose(\nu \mathbf U)\left(\varphi_J-\sgn(\nu u)\right)\mathrm d\omega\mathrm dt\right|\\
		&< 2\bar \rho_{\max}^2\||\nu u|-(\nu\mathbf U)\varphi_J\|_{L^1(\Gamma_T)^n}+\bar\rho_{\max}^2\|\nu(u-u_k)\|_{L^1(\Gamma_T)^n}+\frac{\varepsilon}{4}\\
		&<\varepsilon
		\end{align*}
		and we can use the Radon-Riesz-property and the boundedness of $\gamma\rho_k$ to conclude the strong convergence in $L^p(\Gamma_T,|\mathrm d\mu_u|)$ for all $p\in[1,\infty)$, which finishes the proof.
	\end{proof}
	\begin{remark} 
		Boyer and Fabrie obtained in \cite{Boyer.2012} an equivalent result for the scalar transport equation with non-coupled boundary conditions. However, their proof technique is completely different and uses a mollifying procedure. 
	\end{remark}
	\section{Existence}
	\label{sec:existence}
	 Now, we are finally able to prove the existence of solutions of the transport equation \eqref{eq:transport_equation}. This will be done in two steps. In a first step, we will consider the equation with non-coupled boundary conditions. In this case, the existence of a solution is classical and it can be proven in several ways. One possibility is to use the method of characteristics for smooth data. Then, the first part of the previous theorem yields the existence for general data. In \cite{Boyer.2012}, a proof of the existence using a parabolic approximation is presented. In a second step, we will construct a solution for general boundary conditions by an iterative method, where we will use the first part of Theorem~\ref{theo:sequent_cont}.
	\begin{lemma}[Existence]\label{lemma:exist_G0}
		For $\mathcal G=0$ there exists a solution of the initial-boundary-value problem \eqref{eq:transport_equation} under the assumptions from Section~\ref{sec:transport_assump}.
	\end{lemma}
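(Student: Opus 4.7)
The plan is to exploit that the hypothesis $\mathcal G=0$ completely decouples the vector problem into $n$ independent scalar transport equations with prescribed (non-coupled) inflow data. Indeed, since $\mathbf U=\operatorname{diag}(u)$ and $\mathbf C=\operatorname{diag}(c)$ are diagonal and $\mathcal H(\rho)=\rho_{\inn}$, component $j$ of \eqref{eq:transport_equation} reads
\begin{equation*}
\rho^j_t+(u^j\rho^j)_x+c^j\rho^j=f^j,\quad \rho^j(0,\cdot)=\rho_0^j,\quad (\nu u^j)^-\rho^j=(\nu u^j)^-\rho_{\inn}^j,
\end{equation*}
which is exactly the bounded-domain scalar transport problem with inflow data studied by Boyer and Fabrie in \cite{Boyer.2012}. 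The regularity hypotheses from Section~\ref{sec:transport_assump}, applied componentwise, match theirs. Therefore, first I would note that existence of a weak solution $\rho^j\in C([0,T],L^p(\Omega))$ with a trace $\gamma\rho^j\in L^\infty(\Gamma_T,|\mathrm d\mu_{u^j}|)$ fulfilling the inflow condition follows from their result, and assembling the components yields the desired $\rho\in C([0,T],L^p(\Omega)^n)$.

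To make the argument self-contained within the present framework, I would reproduce the existence proof by approximation and invoke the first part of Theorem~\ref{theo:sequent_cont}, as suggested in the introduction of Section~\ref{sec:existence}. Concretely, I would mollify the data to obtain smooth sequences $u_k,c_k,f_k,\rho_{0,k},\rho_{\inn,k}$ that satisfy all the convergence hypotheses of Theorem~\ref{theo:sequent_cont} part~1 (strong $L^1$ convergence of $u_k,c_k$; strong $L^1(\Gamma_T)$ convergence of $\nu u_k$, which follows from a standard trace estimate since $u_k\to u$ in $L^1((0,T),W^{1,1}(\Omega)^n)$; uniform $L^1((0,T),L^\infty)$ bounds on $(c_k+(u_k)_x)^-$ and $f_k$; and weak-$\star$ $L^\infty$ convergence of $\rho_{0,k}$ and $\rho_{\inn,k}$). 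For the smooth data, a solution $\rho_k$ is constructed scalar-by-scalar via the method of characteristics: each point $(t,x)$ is traced back along the characteristic of $u_k^j$ until it hits either $\{0\}\times\Omega$ (where $\rho_{0,k}^j$ is used) or the inflow boundary (where $\rho_{\inn,k}^j$ is used), with the value propagated according to the ODE $\frac{\mathrm d}{\mathrm ds}\rho_k^j=-(c_k^j+(u_k^j)_x)\rho_k^j+f_k^j$ along that curve. As pointed out in the introduction, these characteristics may terminate at the boundary when $u_k^j$ changes sign, but they still define a bounded weak solution; alternatively, the parabolic regularization used in \cite{Boyer.2012} could be invoked.

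Finally, passing to the limit is immediate: since $\mathcal G_k=\mathcal G=0$, the boundary-operator convergence hypothesis of Theorem~\ref{theo:sequent_cont} is trivial, and the uniform bound $\rho_{\max,k}\leq\rho_{\max}$ follows by taking $\rho_{\max,k}\equiv\rho_{\max}$ since for $\mathcal G=0$ the inequality \eqref{eq:exist_M} reduces to $\rho_{\inn}\exp(-\int_0^t\alpha)\leq\rho_{\max}+F_+$, which holds uniformly once the approximations are close enough in the relevant norms. Theorem~\ref{theo:sequent_cont} then delivers a limit $\rho\in C([0,T],L^p(\Omega)^n)$ that solves \eqref{eq:transport_equation}. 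The main obstacle is organizational rather than conceptual: orchestrating the mollification so that all convergence modes (strong in the interior, $L^1$ at the boundary, weak-$\star$ of the inflow data) hold simultaneously with uniform control of the bounding constant $\rho_{\max,k}$; once this is done, both the method of characteristics at the smooth level and the stability result of the previous section do the remaining work.
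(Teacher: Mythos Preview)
Your proposal is correct and matches the paper's approach: the paper's proof is simply a reference to \cite{Boyer.2012} (and \cite{RoggensackDiss.2014}), and your observation that $\mathcal G=0$ decouples the system into $n$ independent scalar problems is precisely why that citation applies. Your alternative self-contained route via smooth approximation, characteristics, and Theorem~\ref{theo:sequent_cont} is exactly the one the paper's introductory paragraph to Section~\ref{sec:existence} mentions but does not carry out.
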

	\begin{proof}See e.g. \cite{Boyer.2012} or \cite{RoggensackDiss.2014}.
	\end{proof}
	Now that we know the existence of solutions in the case of non-coupled boundary conditions, we will provide a small lemma concerning the monotonicity of the solution operator in dependence on the inflow values. This lemma will turn out to be useful for the construction of the solution in the general case with coupled boundary conditions.
	\begin{lemma}\label{lemma:trans_monoton}
		For $\mathcal G=0$ let the assumptions from Section~\ref{sec:transport_assump} be true  and let the functions $\rho_{\inn,1},\rho_{\inn,2}\in L^{\infty}(\Gamma_T,\mathrm d\mu_u^-)$ be two different inflow values with
		\begin{equation*}
		\rho_{\inn,2}\geq \rho_{\inn,1}\geq 0
		\end{equation*}
		$\mathrm d\mu_u^-$-almost everywhere.
		Then, for the solutions $\rho_i$ of the transport equation \eqref{eq:transport_equation} with inflow values $(\nu\mathbf U)^-\rho_i=(\nu\mathbf U)^-\rho_{\inn,i}$ it holds
		\begin{equation*}
		\rho_2(t)\geq \rho_1(t)
		\end{equation*}
		almost everywhere in $\Omega$. Moreover, for their traces $\gamma\rho_i$ hold
		\begin{equation*}
		\gamma\rho_2\geq\gamma\rho_1
		\end{equation*}
		$|\mathrm d\mu_u|$-almost everywhere in $\Gamma_T$.
	\end{lemma}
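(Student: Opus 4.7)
The plan is to reduce the claim to the non-negativity statement already contained in Lemma~\ref{lemma:upper_bound}. To do so, I would exploit the full linearity of the problem: set $\rho := \rho_2 - \rho_1$ and $\tilde\rho_{\inn} := \rho_{\inn,2} - \rho_{\inn,1}\geq 0$. Because the transport equation is linear in $\rho$ and, in the case $\mathcal G=0$, the boundary operator $\mathcal H_i(q)=\rho_{\inn,i}$ depends only on the inflow datum, subtracting the two weak formulations against the same test function $\varphi$ shows that $\rho$ is a weak solution of the homogeneous transport equation with zero initial data and inflow $(\nu\mathbf U)^-\tilde\rho_{\inn}$. Simultaneously, the linear identity \eqref{eq:trace_exist} combined with the uniqueness of the trace from Theorem~\ref{theo:trace} forces $\gamma\rho=\gamma\rho_2-\gamma\rho_1$, so $\rho$ together with this trace is indeed a solution of the initial-boundary-value problem in the sense of Section~\ref{sec:transport_assump}, with inflow $\tilde\rho_{\inn}$.

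Next I would verify that the reduced problem is an admissible instance of \eqref{eq:transport_equation} so that Lemma~\ref{lemma:upper_bound} can be invoked. The velocity, reaction and regularity hypotheses are inherited verbatim, and $\tilde\rho_{\inn}\in L^{\infty}(\Gamma_T,\mathrm d\mu_u^-)$ is non-negative by assumption. The only condition that deserves a line of verification is \eqref{eq:exist_M}: with $f\equiv 0$, $\rho_0\equiv 0$, $\mathcal G=0$ one has $F_+\equiv 0$, so \eqref{eq:exist_M} reduces to the existence of a componentwise majorant of $\tilde\rho_{\inn}$, which is trivially supplied by any constant vector dominating $\|\tilde\rho_{\inn}\|_{\infty,-}$. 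The lower-bound part of Lemma~\ref{lemma:upper_bound} then yields $\rho(t,\cdot)\geq 0$ almost everywhere in $\Omega$ and $\gamma\rho\geq 0$ for $|\mathrm d\mu_u|$-almost all $(t,\omega)\in\Gamma_T$, which is precisely the desired monotonicity.

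There is no real obstacle in this argument; the one point that calls for a brief justification is the linearity of the trace, since the trace was defined only through the implicit identity \eqref{eq:trace_exist}. Here I would simply observe that \eqref{eq:trace_exist} is linear in the quadruple $(\rho,\gamma\rho,\rho_0,f)$, so the pair $(\rho_2-\rho_1,\gamma\rho_2-\gamma\rho_1)$ satisfies it for the difference data, and uniqueness of the trace closes the argument. Alternatively, one can bypass Lemma~\ref{lemma:upper_bound} altogether by re-running its $\beta(s)=s^-$ renormalization computation directly on $\rho$; the estimate is slightly simpler since the source, the initial data and $\mathcal G$ all vanish, and the boundary term is handled immediately by the sign $\tilde\rho_{\inn}\geq 0$.
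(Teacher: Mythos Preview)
Your proposal is correct and follows essentially the same route as the paper: form the difference $\rho=\rho_2-\rho_1$, observe by linearity that it solves the homogeneous problem with zero initial data and non-negative inflow $\rho_{\inn,2}-\rho_{\inn,1}$, and then invoke the lower bound in Lemma~\ref{lemma:upper_bound}. Your additional remarks on the linearity of the trace and the verification of \eqref{eq:exist_M} are more explicit than the paper's write-up but do not change the argument.
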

	\begin{proof}
		Let $\rho_1$ and $\rho_2$ be the solutions corresponding to the inflow values $\rho_{\inn,1}$ and $\rho_{\inn,2}$. Recall that these solutions $\rho_1$ and $\rho_2$ exist due to Lemma~\ref{lemma:exist_G0}. Then, the difference $\rho=\rho_2-\rho_1$ with trace $\gamma\rho=\gamma\rho_2-\gamma\rho_1$ solves the homogeneous transport equation
		\begin{align*}
		\rho_t+(\mathbf U\rho)_x+\mathbf C\rho&=0&&\text{in $(0,T)\times\Omega$}\\
		\rho(0,\cdot)&=0&&\text{in $\Omega$}\\
		(\nu\mathbf U)^-\rho&=(\nu\mathbf U)^-\left(\rho_{\inn,2}-\rho_{\inn,1}\right)&&\text{on $\Gamma_T$}.
		\end{align*}
		Since the inflow value $\rho_{\inn,2}-\rho_{\inn,1}$ is non-negative $\mathrm d \mu_u^-$ almost everywhere, we can apply Lemma~\ref{lemma:upper_bound} to obtain the lower bound $\rho(t)\geq 0$ almost everywhere in $\Omega$ and $\gamma\rho(t,\omega)\geq 0$ for $|\mathrm d\mu_u|$-almost all $(t,\omega)\in \Gamma_T$.
	\end{proof}
	Equipped with this lemma, we can prove the existence of solutions for general boundary operators $\mathcal H$ using an iterative construction in the next theorem.
		\begin{theorem}[Existence]
		\label{theo:transp_exist}
			Under the assumptions \eqref{assump:ucf} on $u$, $c$ and $f$ and under the assumptions from Section~\ref{sec:transport_assump} on $\mathcal H$, there exists a solution of the initial-boundary-value problem \eqref{eq:transport_equation}.
		\end{theorem}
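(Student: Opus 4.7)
The plan is to construct a solution by a monotone iterative scheme in which the coupling is ``frozen'' from the previous step, and then to pass to the limit via Theorem~\ref{theo:sequent_cont}(1). Concretely, I set $\rho^0\equiv 0$ and, for $k\ge 0$, let $\rho^{k+1}$ be the (unique, by Lemma~\ref{lemma:exist_G0}) solution of the decoupled initial-boundary value problem
\begin{align*}
\rho^{k+1}_t+(\mathbf U\rho^{k+1})_x+\mathbf C\rho^{k+1}&=f &&\text{in }(0,T)\times\Omega,\\
\rho^{k+1}(0,\cdot)&=\rho_0 &&\text{in }\Omega,\\
(\nu\mathbf U)^-\rho^{k+1}&=(\nu\mathbf U)^-\rho_{\inn}^{(k)} &&\text{on }\Gamma_T,
\end{align*}
with shifted inflow datum $\rho_{\inn}^{(k)}:=\rho_{\inn}+\mathcal G(\gamma\rho^k)\in L^\infty(\Gamma_T,\mathrm d\mu_u^-)$ (using the extension of $\mathcal G$ noted at the end of Section~\ref{sec:transport_assump}).

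The first step is to verify, by induction on $k$, that $\rho^k$ admits the a priori bound of Lemma~\ref{lemma:upper_bound} with the same $\rho_{\max}$; equivalently, setting $r(t)=\exp(-\int_0^t\alpha(s)\,\mathrm ds)$, that $r\gamma\rho^k\le\rho_{\max}+F_+$. Assuming this for $k$, the homogeneity identity \eqref{eq:G_continuous}, the positivity of $\mathcal G$, and assumption \eqref{eq:exist_M} combine to give
\begin{equation*}
r\rho_{\inn}^{(k)}-F_+ = r\rho_{\inn}+\mathcal G(r\gamma\rho^k)-F_+ \le r\rho_{\inn}+\mathcal G(\rho_{\max})+\mathcal G(F_+)-F_+ \le \rho_{\max}.
\end{equation*}
This is precisely the hypothesis \eqref{eq:exist_M} needed for the decoupled problem solved by $\rho^{k+1}$ (where the two $\mathcal G$-terms in \eqref{eq:exist_M} vanish because the coupling operator is zero there). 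Combined with $\rho_0\le\rho_{\max}$, Lemma~\ref{lemma:upper_bound} propagates the bound to $\rho^{k+1}$ and $\gamma\rho^{k+1}$.

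The second step is componentwise monotonicity. At $k=0$, Lemma~\ref{lemma:upper_bound} already yields $\rho^1\ge 0=\rho^0$ and $\gamma\rho^1\ge 0=\gamma\rho^0$. Once $\gamma\rho^k\le\gamma\rho^{k+1}$ is known, the positivity of $\mathcal G$ gives $\rho_{\inn}^{(k)}\le\rho_{\inn}^{(k+1)}$, and Lemma~\ref{lemma:trans_monoton} propagates the ordering both to $\rho^{k+1}\le\rho^{k+2}$ and to $\gamma\rho^{k+1}\le\gamma\rho^{k+2}$. Together with the uniform bound from the first step, monotone convergence gives pointwise limits $\rho^\infty$ and $q^\infty$, dominated convergence upgrades this to convergence in every $L^p$, and the uniform $L^\infty$-bound yields weak-$\star$ convergence in the respective $L^\infty$-spaces.

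The final step is the passage to the limit via Theorem~\ref{theo:sequent_cont}(1) applied to the decoupled problems. All data except the inflow are constant in $k$, so the corresponding hypotheses are automatic; $\mathcal G_k\equiv 0$ trivially satisfies the weak-$\star$ continuity assumption on the boundary operators; and the weak-$\star$ continuity of the \emph{original} $\mathcal G$ applied to $\gamma\rho^k\xrightharpoonup{\star}q^\infty$ produces $\rho_{\inn}^{(k)}\xrightharpoonup{\star}\rho_{\inn}+\mathcal G(q^\infty)$ in $L^\infty(\Gamma_T,\mathrm d\mu_u^-)$, the remaining inflow hypothesis. The theorem then delivers a solution $\rho\in C([0,T],L^p(\Omega)^n)$ of the decoupled limit problem with inflow $\rho_{\inn}+\mathcal G(q^\infty)$ and trace $\gamma\rho$; uniqueness of weak-$\star$ limits identifies $\rho=\rho^\infty$ and $\gamma\rho=q^\infty$. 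Its boundary condition then reads $(\nu\mathbf U)^-\gamma\rho=(\nu\mathbf U)^-(\rho_{\inn}+\mathcal G(\gamma\rho))=(\nu\mathbf U)^-\mathcal H(\gamma\rho)$, so $\rho^\infty$ solves the original coupled problem \eqref{eq:transport_equation}.

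The main obstacle is the inductive upper bound: assumption \eqref{eq:exist_M} is engineered precisely so that the correction $\mathcal G(\gamma\rho^k)$ appearing in the next inflow is reabsorbed without enlarging $\rho_{\max}$. The homogeneity identity \eqref{eq:G_continuous} is essential for rewriting $r\mathcal G(\gamma\rho^k)=\mathcal G(r\gamma\rho^k)$, and the choice $\rho^0\equiv 0$ is what makes the base case fit comfortably inside \eqref{eq:exist_M}.
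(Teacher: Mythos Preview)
Your proof is correct and follows essentially the same monotone iteration scheme as the paper's own argument: freeze the coupling by setting $\rho_{\inn}^{(k)}=\mathcal H(\gamma\rho^{k})$, propagate the uniform bound $r\gamma\rho^k\le\rho_{\max}+F_+$ by induction using \eqref{eq:exist_M} and \eqref{eq:G_continuous}, obtain monotonicity from Lemma~\ref{lemma:trans_monoton} and the positivity of $\mathcal G$, and pass to the limit via Theorem~\ref{theo:sequent_cont}(1) with $\mathcal G_k\equiv 0$. The only cosmetic differences are that the paper phrases the convergence in terms of the inflow sequence $\rho_{\inn,k}$ rather than $\gamma\rho^k$ directly, and identifies the coupled boundary condition by comparing the two weak limits $(\nu\mathbf U)^-\gamma\rho_k$ and $(\nu\mathbf U)^-\mathcal H(\gamma\rho_{k-1})$ rather than first identifying $q^\infty=\gamma\rho$; both routes lead to the same conclusion.
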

		\begin{proof}
			We will iteratively construct a sequence of functions converging weakly-$\star$ to a solution $\rho$ of the transport equation. The idea is to use the outflow value of a solution to compute the inflow value for the problem of the next iteration step. Due to the positivity of the operator $\mathcal G$, the obtained sequence will be monotonically increasing.
			
			Therefore, we define the starting value of the sequence $\gamma\rho_0=0$. From Lemma~\ref{lemma:exist_G0} we know that the problem
			\begin{align*}
				(\rho_k)_t+(\mathbf U\rho_k)_x+\mathbf C\rho_k&=f&&\text{in }(0,T)\times\Omega\\
				\rho_k(0)&=\rho_0&&\text{in }\Omega\\
				(\nu \mathbf U)^-\rho_k&=(\nu \mathbf U)^-\mathcal H(\gamma\rho_{k-1})&&\text{on }\Gamma_T
			\end{align*} 
			has a unique solution $\rho_k$ with trace $\gamma\rho_k$.  Once more, we want to use the stability theorem (Theorem \ref{theo:sequent_cont}), but this time with $\rho_{\inn,k}=\mathcal H(\gamma\rho_{k-1})$ and $\mathcal G_k=0$.
			
			As a first step, we have to check the requirements for this theorem. Therefore, we want to analyse the boundedness of the boundary value. From assumption \eqref{eq:exist_M} we know that there is a constant $\rho_{\max}\in \mathbb R^n_{>0}$ with
			\begin{align*}
			\rho_{\inn}\exp\left(-\int_0^t\alpha(s)\mathrm ds\right)-F+\mathcal G(F)+\mathcal G(\rho_{\max})&\leq \rho_{\max}\\
			\shortintertext{and}
			\rho_0&\leq \rho_{\max}
			\end{align*}
			almost everywhere. We want to show by induction that an analogue bound for the boundary value $\rho_{\inn,k}=\mathcal H(\gamma\rho_{k-1})$ and $\mathcal G_k=0$ is valid for all $k$, i.e.\ 
			\begin{equation*}
			\gamma\rho_k\exp\left(-\int_0^t\alpha(s)\mathrm ds\right)-F\leq \rho_{\max}.
			\end{equation*} The base case is clear since $\mathcal G$ and $\rho_{\inn}$ are positive and it holds $\gamma\rho_0=0$. Using the positivity of $\mathcal G$, the induction hypothesis and the assumption \eqref{eq:exist_M} on $\rho_{\max}$ we calculate for the inductive step
			\begin{align*}
				&\mathcal H(\gamma\rho_{k-1})\exp\left(-\int_0^t\alpha(s)\mathrm ds\right)\\
				&=\rho_{\inn}\exp\left(-\int_0^t\alpha(s)\mathrm ds\right)+\mathcal G(\rho_{\max})+\mathcal G(F)\\
				&\quad-\mathcal G\left(\rho_{\max}+F-\gamma\rho_{k-1}\exp\left(-\int_0^t\alpha(s)\mathrm ds\right)\right)\\
				&\leq \rho_{\inn}\exp\left(-\int_0^t\alpha(s)\mathrm ds\right)+\mathcal G(\rho_{\max})+\mathcal G(F)\\
				&\leq \rho_{\max}+F.
			\end{align*}
			Thus, Lemma~\ref{lemma:upper_bound} yields the desired bound for $\gamma\rho_k$ for all $k$. 
			
			The next aim is to show the convergence of the inflow values $\rho_{\inn,k}$. As we have just seen, this sequence is bounded and it is monotonically increasing, i.e.\ it holds \begin{equation*}
			\rho_{\inn,k+1}(t,\omega)\geq \rho_{\inn,k}(t,\omega)
			\end{equation*} for $\mathrm d\mu_u^-$-almost all $(t,\omega)\in \Gamma_T$. We will also prove this by induction. Due to the non-negativity of $f$ and Lemma~\ref{lemma:upper_bound} it is $\gamma\rho_1(t,\omega)\geq 0$ for $|\mathrm d\mu_u|$-almost all $(t,\omega)$ and thus, because of the positivity of $\mathcal G$ and $\rho_{\inn}$
			\begin{equation*}
			\rho_{\inn,2}=\mathcal H(\gamma\rho_1)\geq \rho_{\inn}=\mathcal H(\gamma\rho_0)=\rho_{\inn,1} 
			\end{equation*} 
			holds $\mathrm d\mu_u^-$-almost everywhere. For the inductive step, we will use the previous lemma about the monotonicity. Assume, it is $\rho_{\inn,k}\geq\rho_{\inn,k-1}$. Then, Lemma~\ref{lemma:trans_monoton} immediately yields $\gamma\rho_k\geq \gamma\rho_{k-1}$. Thus, the positivity of $\mathcal G$ leads to
			\begin{equation*}
				\rho_{\inn,k+1}-\rho_{\inn,k}=\mathcal H(\gamma\rho_k)-\mathcal H(\gamma\rho_{k-1})=\mathcal G(\gamma\rho_{k}-\gamma\rho_{k-1})\geq 0.
			\end{equation*}
			
			Since $(\rho_{\inn,k})_k$ is bounded and monotone, we can apply the monotone convergence theorem to conclude that there exists a function $q\in L^{\infty}(\Gamma_T,\mathrm d\mu_u^-)$ with
			\begin{equation*}
				\rho_{\inn,k}\to q
			\end{equation*}
			in $L^{p}(\Gamma_T,\mathrm d\mu_u^-)$. Since the sequence is bounded in $L^{\infty}(\Gamma_T,\mathrm d\mu_u^-)$, it is also weakly\nobreakdash-$\star$ convergent in $L^{\infty}(\Gamma_T,\mathrm d\mu_u^-)$. Hence, Theorem~\ref{theo:sequent_cont} is applicable and it yields the existence of a solution $\rho$ with trace $\gamma\rho$ of
			\begin{align*}
			\rho_t+(\mathbf U\rho)_x+\mathbf C\rho&=f&&\text{in }(0,T)\times\Omega\\
			\rho(0)&=\rho_0&&\text{in }\Omega\\
			(\nu\mathbf U)^-\rho&=(\nu\mathbf U)^-q&&\text{on }\Gamma_T.
			\end{align*}
			Furthermore, it holds
			\begin{equation*}
			\gamma\rho_{k}\xrightharpoonup{\star}\gamma\rho
			\end{equation*}
			in $L^{\infty}(\Gamma_T,|\mathrm d\mu_u|)$. The weak-$\star$ continuity of the operator $\mathcal G$ implies
			\begin{align*}
			\mathcal H(\gamma\rho_{k})\xrightharpoonup{\star}\mathcal H(\gamma\rho)
			\end{align*}
			in $L^{\infty}(\Gamma_T,\mathrm d\mu_u^-)$. Finally, we conclude
			\begin{align*}
			(\nu\mathbf U)^-\gamma\rho&\leftharpoonup(\nu\mathbf U)^-\gamma\rho_{k}\\
			&=(\nu\mathbf U)^-\mathcal H(\gamma\rho_{k-1})\\
			&\rightharpoonup(\nu\mathbf U)^-\mathcal H(\gamma\rho)
			\end{align*}
			in $L^1(\Gamma_T)$. Because of the uniqueness of the weak limit the boundary condition
			\begin{equation*}
			(\nu\mathbf U)^-\gamma\rho=(\nu\mathbf U)^-\mathcal H(\gamma\rho)
			\end{equation*}
			is satisfied.
		\end{proof}
		
\begin{remark}
	Remark, that the required non-negativity of $f$ is essential in the previous proof. In order to prove the existence without this restriction, one needs an additional requirement on $\mathcal G$, e.g.\ the existence of a constant vector $\hat \rho_{\min}\in\mathbb R^n_{\geq0}$ with $\mathcal G(F_-+\hat{\rho}_{\min})\leq F_-+\hat \rho_{\min}$ for $\mathrm d\mu_u^-$ almost all $(t,\omega)\in\Gamma_T$. Here, it is $F_-(t)=\exp\left(-\int_0^t\alpha(s)\mathrm ds\right)\begin{pmatrix}
\|(f_1(s,\cdot))^{-}\|_{L^{\infty}}\\
\vdots\\
\|(f_n(s,\cdot))^{-}\|_{L^{\infty}}
\end{pmatrix}\mathrm ds.$
Then, we can split the problem in the two subproblems
\begin{align*}
\rho^1_t+(\mathbf U\rho^1)_x+\mathbf C\rho^1&=f^+&&\text{in $(0,T)\times\Omega$}\\
\rho^1(0,\cdot)&=\rho_0&&\text{in $\Omega$}\\
(\nu\mathbf U)^-\rho^1&=(\nu\mathbf U)^-\mathcal H(\rho^1|_{\Gamma_T})&&\text{on $\Gamma_T$}\\
\shortintertext{and}
\rho^2_t+(\mathbf U\rho^2)_x+\mathbf C\rho^2&=f^-&&\text{in $(0,T)\times\Omega$}\\
\rho^2(0,\cdot)&=0&&\text{in $\Omega$}\\
(\nu\mathbf U)^-\rho^2&=(\nu\mathbf U)^-\mathcal G(\rho^2|_{\Gamma_T})&&\text{on $\Gamma_T$}
\end{align*}
so that $\rho=\rho^1-\rho^2$ is a solution of the original problem. Due to the existence of $\hat \rho_{\min}$, both subproblems fulfill all requirements of section \ref{sec:transport_assump}.
\end{remark}
	\section{Application on networks}
	\label{sec:application}
	One of the main applications of such coupled boundary value problems are transport problems on networks or graphs. In this case, the boundary operator describes the coupling of the density at the nodes of the network. As usual, we enforce the mass to be conserved at the nodes. Furthermore, the density should be distributed by a prescribed ratio. This is similar to the coupling conditions for traffic flow (see \cite{garavello2006}) or open channel flow (see \cite{cunge1980} or \cite{roggensack2013}).
	
	In order to describe such flows on a network, the notation of graph theory is very well suited. We will briefly introduce the notions which are needed in this context. For more details, we refer to the PhD thesis \cite{RoggensackDiss.2014} and to any text book about graph theory as e.g.\ \cite{Diestel.2006}.
	
Let $G=(V,E,w,\init,\ter)$ be an oriented, weighted and connected graph with $n$ edges and $m$ nodes and let $\mathbf B\in\{-1,0,1\}^{m\times n}$ denote its incidence matrix. Assume that the graph has $k>0$ inner nodes $v_j\in V$ with degree $d(v_j)>1$. We denote by $\mathbf B_{>1}\in\{-1,0,1\}^{k\times n}$ the submatrix of the incidence matrix containing only the rows of $\mathbf B$ corresponding to inner nodes. In the same way, $\mathbf B_{=1}\in\{-1,0,1\}^{(m-k)\times n}$ is the submatrix corresponding to the outer nodes $v_j\in V$ with degree $d(v_j)=1$.

With this notation, we are able to write the mass conservation at the inner nodes as
\begin{equation*}
\int_{\Gamma}\left(\nu\mathbf B_{>1}\right)^-\left(\nu\mathbf U\right)\rho\mathrm d\omega=0.
\end{equation*}
If there is more than one outflow edge at the same node, we need to specify the distribution of the density. The simplest model is the perfect mixing of the density, i.e.\ the density in the outflow edges is continuous at the node. However, also other linear models to determine the distribution on the different outflow edges are possible.

In order to define the above-described boundary operator for the perfect mixing precisely, we introduce the matrix $\mathbf M\in L^1((0,T))^{k\times k}$ by
\begin{equation}
\mathbf M=\int_{\Gamma}\left(\nu\mathbf B_{>1}\right)^-\left(\nu\mathbf U\right)^-\left(\nu\mathbf B_{>1}\transpose\right)^-\mathrm d\omega.
\end{equation}
A simple computation using the structure of the incidence matrix shows that this matrix $\mathbf M=(m_{ij})$ has a diagonal form with
	\begin{equation*}
	m_{ij}=\delta_{ij}\int_{\Gamma}\left(\left(\nu\mathbf B_{>1}\right)^-(\nu u)^-\right)_i\mathrm d\omega
	\end{equation*}
	(see \cite{RoggensackDiss.2014}). For the well-posedness of the transport equation we need additionally some sort of energy conservation. We assume
	\begin{equation}
	\label{eq:weak_energy:cons}
\int_{\Gamma}\left(\left(\nu\mathbf B_{>1}\right)^-(\nu u)\right)_i\mathrm d\omega = 0
	\end{equation}
	for all inner nodes $v_i\in V$ with $d(v_i)>1$. In the formal low Mach number limit of the Euler or Navier-Stokes equation on a network studied in \cite{RoggensackDiss.2014} this condition is fulfilled naturally.
	
Then, for $\rho_{\out}\in L^{\infty}((0,T))^{m-k}$ with $\rho_{\out}\geq \bar \rho_{\min}>0$ almost everywhere, we define the boundary operator
\begin{align*}
\mathcal H_u\colon L^{\infty}(\Gamma_T,\mathrm d\mu_u^+)&\to L^{\infty}(\Gamma_T,\mathrm d\mu_u^-)\\
\rho&\mapsto \rho_{\inn}+\mathcal G_u(\rho)
\end{align*}
with
\begin{align*}
\rho_{\inn}&=\left(\nu\mathbf B_{=1}\transpose\right)^-\rho_{\out}
\shortintertext{and}
\mathcal G_u&\colon L^{\infty}(\Gamma_T,\mathrm d\mu_u^+)\to L^{\infty}(\Gamma_T,\mathrm d\mu_u^-)\\
\mathcal G_u(\rho)&=\left(\nu \mathbf B_{>1}\transpose\right)^-\mathbf M^{-1}\int_{\Gamma}\left(\nu\mathbf B_{>1}\right)^-\left(\nu\mathbf U\right)^+\rho\mathrm d\omega.
\end{align*}
Here and in the following, $\mathbf M^{-1}$ has to be understood in the sense of the Moore-Penrose pseudoinverse.
 Clearly, this boundary operator satisfies the assumption on the perfect mixing since for each inner node there is a common outflow value for the density. Furthermore, the mass is conserved at the inner node since it holds
 \begin{equation}
 \label{eq:H_mass_cons}
\begin{split}
 \int_{\Gamma}\left(\nu\mathbf B_{>1}\right)^-\left(\nu\mathbf U\right)^-\mathcal H_u(\rho)\mathrm d\omega&= \int_{\Gamma}\left(\nu\mathbf B_{>1}\right)^-\left(\nu\mathbf U\right)^-\mathcal G_u(\rho)\mathrm d\omega\\
 &=\mathbf M\mathbf M^{-1}\int_{\Gamma}\left(\nu\mathbf B_{>1}\right)^-\left(\nu\mathbf U\right)^+\rho\mathrm d\omega\\
 &=\int_{\Gamma}\left(\nu\mathbf B_{>1}\right)^-\left(\nu\mathbf U\right)^+\rho\mathrm d\omega
\end{split}
 \end{equation}
 for all $\rho\in L^{\infty}(\Gamma_T,\mathrm d\mu_u^+)$ and almost all $t\in[0,T]$. For the first equality we used the fact $(\nu_1\mathbf B_{>1})^-\mathbf W(\nu_2\mathbf B_{=1}\transpose)^-=0$ for arbitrary diagonal matrices $\mathbf W$ and $\nu_1,\nu_2\in\{-1,1\}$. And for the last equality we have to take into account the energy conservation \eqref{eq:weak_energy:cons} since $\mathbf M^{-1}$ is just the pseudoinverse: In the case $m_{ii}=0$ it is also
\begin{equation*}
 \int_{\Gamma}\left(\left(\nu\mathbf B_{>1}\right)^-(\nu u)^+\right)_i\mathrm d\omega = 0
\end{equation*}
by \eqref{eq:weak_energy:cons}. 
This is a sum of positive terms and thus all summands have to be zero and the third equality in \eqref{eq:H_mass_cons} is obvious.
 
Until now we have formally seen that the operator describes the above-mentioned coupling conditions. Before we check that $\mathcal H_u$ fulfils the requirements of Section~\ref{sec:transport_assump} for the well-posedness of the problem, we will show the well-definedness of $\mathcal G_u$ and we will introduce the pre-adjoint operator $\bar{\mathcal G}_u$, to which $\mathcal G_u$ is the adjoint operator.

	For all $\rho\in L^{\infty}(\Gamma_T,\mathrm d\mu_u^+)$ and for almost all $t\in [0,T]$ it is either $m_{jj}(t)=0$ or
	\begin{equation}
	\label{eq:ineq_m}
	m_{jj}(t)^{-1}\int_{\Gamma}\left(\left(\nu\mathbf B_{>1}\right)^-\left(\nu\mathbf U(t)\right)^+\rho(t,\omega)\right)_j\mathrm d\omega\leq \max_{j,\omega}(\rho_j(t,\omega)).
	\end{equation}
	In the case $m_{jj}(t)=0$, it is $\left(\mathcal G_u\right)_i(\rho)(t,\omega)=0$ for all adjacent edges $e_i$.
	
	Altogether, this proves
	\begin{equation*}
	\left|\left(\mathcal G_u\right)_i(\rho)\right|\leq \max_{j,\omega}\left(|\rho_j(t,\omega)|\right)
	\end{equation*}
	for almost all $(t,\omega)\in \Gamma_T$ with $\nu u_i(t,\omega)<0$.
	Hence, it is $\mathcal G_u(\rho)\in L^{\infty}(\Gamma_T,\mathrm d\mu_u^-)$ and the linear operator $\mathcal G_u$ is well-defined. Descriptively, this is not surprising since the coupling condition chooses a weighted mean value of the inflow densities for each $t$. Clearly, this weighted mean value is smaller than the maximum.
	
As a next step, we define the operator $\bar{\mathcal G}_u$ as
\begin{align*}
\bar{\mathcal G}_u\colon L^1(\Gamma_T,\mathrm d\mu_u^-)&\to L^1(\Gamma_T,\mathrm d\mu_u^+)\\
\bar{\rho}&\mapsto\left(\nu \mathbf B_{>1}\transpose\right)^-\mathbf M^{-1}\int_{\Gamma}\left(\nu\mathbf B_{>1}\right)^-\left(\nu\mathbf U\right)^-\bar \rho\mathrm d\omega.
\end{align*}
This operator is well-defined by the same argument.
 We observe that $\mathcal G_u$ is the adjoint operator of $\bar{\mathcal G}_u$. To check this, let be $\rho\in L^{\infty}(\Gamma_T,\mathrm d\mu_u^+)$ and $\bar \rho\in L^1(\Gamma_T,\mathrm d\mu_u^-)$. Then, we compute
	\begin{align*}
	\int_{\Gamma_T}\bar{\rho}\transpose(\nu\mathbf U)^-\mathcal G_u(\rho)\mathrm d\omega\mathrm dt&=\int_0^T\int_{\Gamma}\bar{\rho}\transpose(\nu\mathbf U)^-\left(\nu \mathbf B_{>1}\transpose\right)^-\mathrm d\omega\mathbf M^{-1}\int_{\Gamma}\left(\nu\mathbf B_{>1}\right)^-\left(\nu\mathbf U\right)^+\rho\mathrm d\omega\\
	&=\int_{\Gamma_T}\bar{\mathcal G}_u(\bar \rho)\transpose(\nu\mathbf U)^+\rho\mathrm d\omega\mathrm dt.
	\end{align*}
 Now, we will check all requirements for the boundary operator assumed in Section~\ref{sec:transport_assump}.
\begin{lemma}[Continuity equation on a network]
Let $u\in L^1((0,T),W^{1,1}(\Omega)^{n})$ with $u_x\in L^1((0,T),L^{\infty}(\Omega)^n)$ and $\rho_0\in L^\infty(\Omega)^n$ be given and let be $\mathcal H_u$ the above-defined operator. Then, the continuity equation on a network
\begin{align*}
\rho_t+\left(\mathbf U\rho\right)_x&=0&\text{in }(0,T)\times\Omega\\
	\label{eq:transport_equation}
	\rho(0,\cdot)&=\rho_0&\text{in }\Omega\\
	\notag
	(\nu \mathbf U)^-\rho&=(\nu \mathbf U)^-\mathcal H_u(\rho|_{\Gamma_T})\quad&\text{on }\Gamma_T
\end{align*}
has a unique solution which is bounded and depends continuously on $\rho_0$ and $\rho_{\inn}$. In addition, for each $\rho_0\in L^{\infty}(\Omega)^n$ with $\rho_0\geq \bar\rho_{\min}>0$ almost everywhere, there is a vector $\rho_{\min}\in\mathbb R^n_{>0}$ satisfying inequality \eqref{eq:exist_m}, which guarantees the existence of a strictly positive lower bound of the solution.
\end{lemma}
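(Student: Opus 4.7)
The plan is to verify that the boundary operator $\mathcal{H}_u$ satisfies all conditions imposed on $\mathcal{H}$ in Section~\ref{sec:transport_assump}; once this is established, existence, uniqueness, the upper bound, and continuous dependence on the data follow directly from Theorem~\ref{theo:transp_exist}, the uniqueness theorem of Section~\ref{sec:uniqueness}, Lemma~\ref{lemma:upper_bound}, and the first part of Theorem~\ref{theo:sequent_cont} (applied with $u_k=u$, $c_k=c=0$, $f_k=f=0$ and $\mathcal{G}_k=\mathcal{G}_u$ fixed, while $\rho_{0,k}$ and $\rho_{\inn,k}$ vary).

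Four of the five required properties of $\mathcal{G}_u$ are nearly immediate. Weak-$\star$ continuity follows from the adjoint identity computed just above the lemma, which exhibits $\mathcal{G}_u$ as the Banach space adjoint of the bounded operator $\bar{\mathcal{G}}_u$ on $L^1(\Gamma_T,\mathrm d\mu_u^-)$. Causality holds because $\mathcal{G}_u(\rho)(t,\cdot)$ depends only on $\rho(t,\cdot)$ (the definition integrates over $\Gamma$ but not over time), so multiplication by $\chi_{[0,t]}$ commutes with $\mathcal{G}_u$. Positivity is evident from the definition, since $(\nu\mathbf{B}_{>1}\transpose)^-$, $\mathbf{M}^{-1}$, $(\nu\mathbf{B}_{>1})^-$ and $(\nu\mathbf{U})^+$ all have non-negative entries. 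For the $L^1$-norm estimate with weight matrix $\overline{\mathbf{W}}=\Id$, linearity and positivity give $|\mathcal{G}_u(\rho)|\leq\mathcal{G}_u(|\rho|)$, and the mass conservation identity \eqref{eq:H_mass_cons} applied to $|\rho|$ yields, after summation over inner nodes, $\|\mathcal{G}_u(\rho)\|_{1,w,-}\leq\|\rho\|_{1,w,+}$; the inequality rather than equality accounts precisely for the mass that leaves through outflow endpoints at outer nodes.

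The main obstacle is the construction of the constant bound $\rho_{\max}$ satisfying \eqref{eq:exist_M}, because $\mathcal{G}_u$ acts as the identity on constants and no strict contraction is available. The key observation is that in $\Gamma_T$ the supports of $\rho_{\inn}=(\nu\mathbf{B}_{=1}\transpose)^-\rho_{\out}$ and of $\mathcal{G}_u(\cdot)$ are disjoint: the former lives on inflow endpoints at outer (degree-one) nodes, the latter on inflow endpoints at inner nodes. With $c=f=0$ (so $F_+=0$ and $\alpha=\|(u_x)^-\|_{L^\infty}$), condition \eqref{eq:exist_M} therefore splits into two pointwise inequalities on these disjoint supports. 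Choosing $\rho_{\max}=C\Eins$ for $C\geq\max(\|\rho_{\inn}\|_\infty,\|\rho_0\|_\infty)$, the outer-support inequality is trivial, while on the inner support the weak energy conservation \eqref{eq:weak_energy:cons} gives $\mathcal{G}_u(C\Eins)=C\Eins$ on inflow points at inner nodes with $m_{jj}>0$ (and zero on those with $m_{jj}=0$), so the required inequality holds with equality.

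With the five properties verified, the abstract results yield the first three assertions of the lemma. For the strictly positive lower bound under $\rho_0\geq\bar\rho_{\min}>0$, the same disjoint-support strategy applies: taking $\rho_{\min}=\bar\rho_{\min}\Eins$, the estimate $\rho_{\inn}\geq\bar\rho_{\min}$ on the outer-node support (from $\rho_{\out}\geq\bar\rho_{\min}$) together with $\mathcal{G}_u(\bar\rho_{\min}\Eins)=\bar\rho_{\min}\Eins$ on the inner-node support verify \eqref{eq:exist_m}. The additional assumption $(c+u_x)\in L^1((0,T),L^\infty(\Omega)^n)$ of Lemma~\ref{lemma:lower_bound} reduces here to $u_x\in L^1((0,T),L^\infty(\Omega)^n)$, which is part of the lemma's hypothesis, so Lemma~\ref{lemma:lower_bound} applies and delivers the desired positive lower bound.
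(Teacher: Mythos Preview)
Your proof is correct and follows essentially the same approach as the paper: verify the five structural hypotheses on $\mathcal G_u$ from Section~\ref{sec:transport_assump} and then invoke the abstract existence, uniqueness, boundedness and stability results. Your disjoint-support argument for the bound \eqref{eq:exist_M} is simply a cleaner rephrasing of the paper's algebraic computation $\rho_{\inn}r+\mathcal G_u(\rho_{\max}\Eins_n)\le\rho_{\max}(\nu\mathbf B_{=1}^{T})^-\Eins_{m-k}+\rho_{\max}(\nu\mathbf B_{>1}^{T})^-\Eins_k=\rho_{\max}\Eins_n$, and your choice $\rho_{\min}=\bar\rho_{\min}\Eins$ is a (slightly smaller but equally valid) alternative to the paper's $\rho_{\min}=\min\{\ess\inf\rho_0,\ess\inf(\bar r\rho_{\out})\}$.
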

\begin{proof}
We will verify each of the assumptions of Section~\ref{sec:transport_assump} so that we can apply the theory of the previous sections. Therefore, let be $\rho\in L^{\infty}(\Gamma_T,\mathrm d\mu_u^+).$

\textbf{Weak-$\star$ continuity:} Since $\mathcal G_u$ is the adjoint of another operator, namely $\bar{\mathcal G}_u$, it is weak-$\star$ continuous.

\textbf{$L^1$-operator norm:} As weight matrix for the $L^p(\Gamma_T,\mathrm d\mu_u^{\pm})$ norm we choose 
	the identity matrix $\overline{\mathbf W}=\mathbf I$. Then, using the definition of $\mathbf M$, we estimate
	\begin{align*}
	\|\mathcal G_u(\rho)\|_{1,w,-}&=\int_{\Gamma_T}|\mathcal G_u(\rho)|\transpose(\nu u)^-\mathrm d\omega\mathrm dt\\
	&=\int_0^T\left|\int_{\Gamma}\rho\transpose(\nu \mathbf U)^+\left(\nu\mathbf B_{>1}\transpose\right)^-\mathrm d\omega\right|\mathbf M^{-1}\int_{\Gamma}\left(\nu\mathbf B_{>1}\right)^-(\nu u)^-\mathrm d\omega\mathrm dt\\
	&=\int_0^T\left|\int_{\Gamma}\rho\transpose(\nu \mathbf U)^+\left(\nu\mathbf B_{>1}\transpose\right)^-\mathrm d\omega\right|\Eins\mathrm dt\\
	&\leq \|\rho\|_{1,w,+}.
	\end{align*}
	Thus, the $L^1$-operator norm of $\mathcal G_u$ is less or equal one.
	
	\textbf{Causality:}
	Equation \eqref{eq:G_chi} is fulfilled for almost all $t\in[0,T]$ since it holds
	\begin{equation*}
	\chi_{[0,t]}\mathcal G_u(\chi_{[0,t]}\rho)=\left(\nu \mathbf B_{>1}\transpose\right)^-\mathbf M^{-1}\int_{\Gamma}\left(\nu\mathbf B_{>1}\right)^-\left(\nu\mathbf U\right)^+\chi_{[0,t]}\rho\mathrm d\omega=\chi_{[0,t]}\mathcal G_u(\rho)
	\end{equation*}
	by definition of $\mathcal G_u$.
	
	\textbf{Positivity:} Let be $\rho\geq0$ almost everywhere. Then, $\mathcal G_u(\rho)$ is positive as sum of products of positive factors.
	
	\textbf{Boundedness:} For any $\rho_0\in L^{\infty}(\Omega)^n$ define the scalar \begin{equation*}
	\rho_{\max}=\max\left(\|\rho_{\out}r\|_{L^{\infty}(\Gamma_T)^{m-k}},\|\rho_0\|_{L^{\infty}(\Omega)^n}\right)
	\end{equation*}
	with $r=\exp\left(-\int_0^t\|(u_x(s,\cdot))^-\|_{L^{\infty}(\Omega)^n}\mathrm ds\right)$. Observe that the reaction term $c$ and the source term $f$ are not present in the continuity equation. Due to the definition of the matrix $\mathbf M$, it is
	\begin{equation*}
	\mathcal G_u(\rho_{\max}\Eins_n)=\rho_{\max}\mathcal G_u(\Eins_n)=\rho_{\max}\left(\nu\mathbf B_{>1}\transpose\right)^-\mathbf M^{-1}\mathbf M\Eins_k=\rho_{\max}\left(\nu\mathbf B_{>1}\transpose\right)^-\Eins_k
	\end{equation*}
	and thus, it follows also
	\begin{align*}
		\rho_{\inn}r+\mathcal G_u(\rho_{\max}\Eins_n)&=\left(\nu\mathbf B_{=1}\transpose\right)^-\rho_{\out}r+\mathcal G_u(\rho_{\max}\Eins_n)\\
		&\leq \rho_{\max}\left(\nu\mathbf B_{=1}\transpose\right)^-\Eins_{m-k}+\rho_{\max}\left(\nu\mathbf B_{>1}\transpose\right)^-\Eins_k\\
		&=\rho_{\max}(\nu\mathbf B\transpose)^-\Eins_m\\
		&=\rho_{\max}\Eins_{n}
	\end{align*}
	$\mathrm d\mu_u^-$-almost everywhere. Here, one has to be careful with the different dimensions of the $\Eins$-vectors. In the last step, we used that $(\nu\mathbf B)^-$ has in each column exactly one non-vanishing entry.

	Now, we additionally assume $\rho_0\geq \rho_{\min}>0$ almost everywhere and denote \begin{equation*}
	\bar r(t)=\exp\left(\int_0^t\|(u_x(s,\cdot))^+\|\mathrm ds\right).
	\end{equation*} Then, we define
	\begin{equation*}
	\rho_{\min}=\min\{\ess\inf_{\Omega} \rho_0,\ess\inf_{\Gamma_T} (\bar r\rho_{\out})\} \geq \bar\rho_{\min}>0
	\end{equation*}
	and thus, it holds
	\begin{align*}
	\rho_{\inn}\bar r+\mathcal G_u(\rho_{\min}\Eins_n)&=\left(\nu\mathbf B_{=1}\transpose\right)^-\rho_{\out}\bar r+\rho_{\min}\mathcal G_u(\Eins_n)\\
	&\geq  \rho_{\min}\left(\nu\mathbf B_{=1}\transpose\right)^-\Eins_{m-k}+\rho_{\min}\left(\nu\mathbf B_{>1}\transpose\right)^-\Eins_k\\
	&=\rho_{\min}(\nu\mathbf B\transpose)^-\Eins_m\\
	&=\rho_{\min}\Eins_{n}
	\end{align*}
	$\mathrm d\mu_u^-$-almost everywhere. This completes the proof.

\end{proof}
\begin{remark}
Using the stability theorem \ref{theo:sequent_cont}, one can also proof that the solution of the equations on a network continuously depends on the velocity $u$.
\end{remark}

	\bibliographystyle{plain}
	\bibliography{../../bibliothek}

\end{document}